\newtheorem{theorem}{\bf Theorem}[section]
\newcommand{\rmd}{\mathrm{d}}
\newcommand{\rmi}{\mathrm{i}}
\newcommand{\rme}{\mathrm{e}}
\newcommand{\im}{\mathrm{Im}}
\newcommand{\re}{\mathrm{Re}}
\newcommand{\pd}{\partial}
\newcommand{\zb}{\bar{z}}
\newcommand{\mymat}[1]{\begin{pmatrix} #1\end{pmatrix}}
\newcommand{\vr}{\mathbf{r}}
\begin{document}

\title{
Construction of exact minimal parking garages:
Nonlinear helical motifs in optimally packed lamellar structures}

\author{Luiz C. B. da Silva 
and Efi Efrati}

\address{Department of Physics of Complex Systems,
 Weizmann Institute of Science, Rehovot 7610001, Israel}

\subject{Geometry, applied mathematics, mathematical physics}

\keywords{Minimal surface, holomorphic representation, Helical, Twist grain boundary}

\corres{
\email{luiz.da-silva@weizmann.ac.il}
\email{efi.efrati@weizmann.ac.il}
}

\begin{abstract}
Minimal surfaces arise as energy minimizers for fluid membranes and are thus found in a variety of biological systems. The tight lamellar structures of the endoplasmic reticulum and plant thylakoids are composed of such minimal surfaces in which right and left handed helical motifs are embedded in stoichiometry suggesting global pitch balance. So far, the analytical treatment of helical motifs in minimal surfaces was limited to the small-slope approximation where motifs are represented by the graph of harmonic functions. However, in most biologically and physically relevant regimes the inter-motif separation is comparable with its pitch, and thus this approximation fails. Here, we present a recipe for constructing exact minimal surfaces with an arbitrary distribution of helical motifs, showing that any harmonic graph can be deformed into a minimal surface by exploiting lateral displacements only. We analyze in detail pairs of motifs of the similar and of opposite handedness and also an infinite chain of identical motifs with similar or alternating handedness. Last, we study the second variation of the area functional for collections of helical motifs with asymptotic helicoidal structure and show that in this subclass of minimal surfaces stability requires that the collection of motifs is pitch balanced.  
\end{abstract}

\begin{fmtext}
 \end{fmtext} \maketitle
\section{Introduction}

From soap films and liquid crystals to plant thylakoids and the endoplasmic reticulum, minimal surfaces arise as the ground state of a variety of manmade and naturally occurring membranes and lamellar structures. Minimal surfaces minimize the area of a surface that passes through a given boundary \cite{Rado1933}, and thus arise naturally in cases where surface tension is dominant. However, minimal surfaces also arise in systems dominated by membrane bending energy as they constitute trivial critical points of the Helfrich free energy \cite{HelfrichZfN1973}.

Recently, helical motifs were discovered in the lamellar structures of the endoplasmic reticulum \cite{TerasakiCell2013} and plant and cyanobacteria thylakoids \cite{MustardyPC2008,LibertonPP2011}. Both right and left handed motifs were observed in both systems, in stoichiometry that suggested global pitch balance. While in the endoplasmic reticulum the right and left handed helical motifs were mirror images of one another and appeared in equal amounts, in the thylakoid the right and left handed motifs differed in their pitch, core radius, and density yet preserved pitch balance on average \cite{BussiPNAS2019}. 

To advance our understanding of these systems we require the ability to construct minimal surfaces in which the appropriate helical motifs are embedded. Twist grain boundaries, composed of infinitely many helical motifs of a single handedness embedded along a line in a minimal surface, allow an exact formulation and are well understood \cite{KamienPRL1999,SantangeloPRL2006,MatsumotoIF2017}. In contrast, embedding finitely many motifs in a minimal surface, as well as combining motifs of different pitch values remains a challenge. Coming to address these structures more explicitly Guven \emph{et al} \cite{GuvenPRL2014} considered the small slope approximation for minimal surfaces. The main advantage of this approach is that minimal surfaces are obtained as harmonic graphs (i.e. surfaces given in a Monge-patch parametrization $(x,y,h(x,y))$, where $h$ is harmonic), and thus this approach allows for the simple addition of helical motifs of arbitrary geometry and topology. However, the resulting surfaces are not exactly minimal. This is exceptionally apparent in the immediate vicinity of the helical structures, i.e. within a distance of a few pitch lengths from the core of each motif, as can be observed in Fig. \ref{fig:DeviationMinimalityGraph2helicoid}.

In the biologically and physically relevant regimes,  helical motifs are commonly separated by a distance comparable to the pitch, rendering the small slope approximation irrelevant for these structures \cite{BussiPNAS2019,TerasakiCell2013}. With the purpose of bridging this gap, in this work, we provide a recipe for constructing exact minimal surfaces with an arbitrary distribution of helical motifs. We first show that any harmonic graph (approximately minimal surface) can be deformed into an exact minimal surface through an explicit, yet non-local operation. We moreover show that every minimal surface could be obtained through this recipe. 
We conclude by surveying key examples: a pair of motifs of the same and opposite pitch, and an 
infinite chain of identical motifs  distributed along a line with similar or alternating handedness.

\section{Enneper immersions of minimal surfaces}
\label{Sect::EnneperImmersions}

\begin{figure}[t]
    \centering
    {\includegraphics[width=\linewidth]{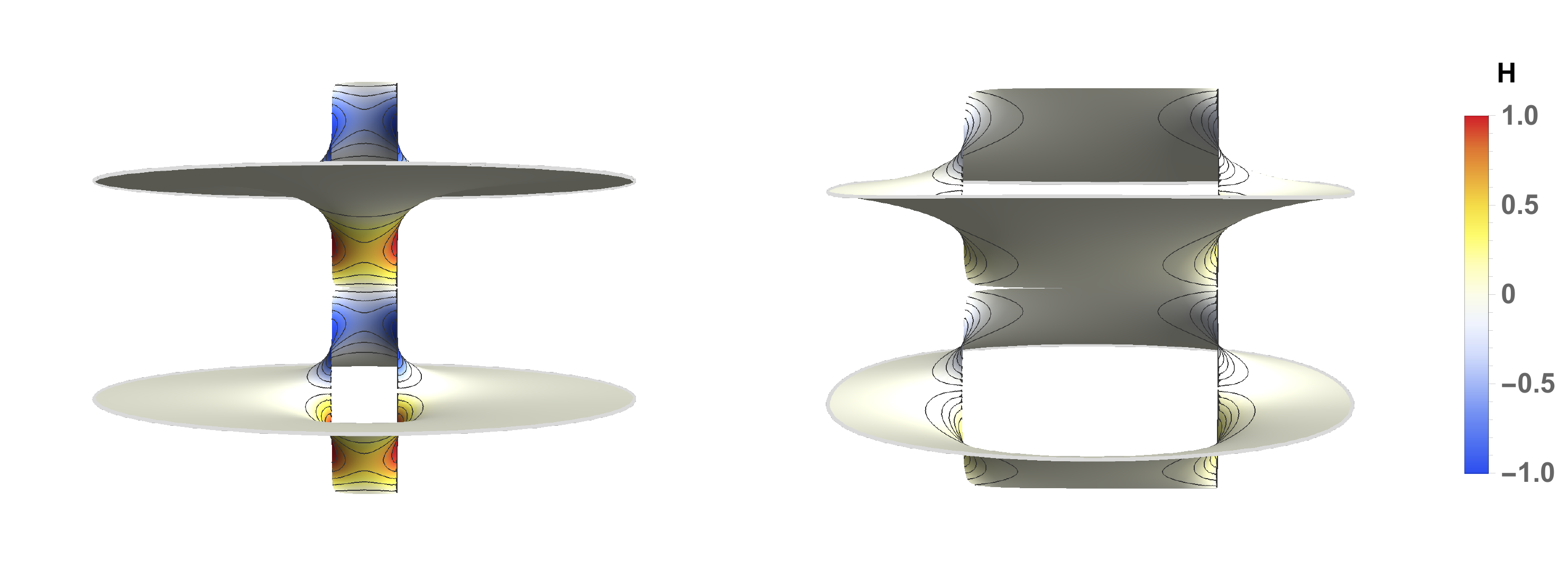}}
    \caption{Deviation from minimality in the small slope approximation as a function of $\frac{p}{R}$. The figure depicts two layers obtained from the graph of the harmonic function $h(z)=p\arg(z-\frac{R}{2})-p\arg(z+\frac{R}{2})$ colored by its mean curvature $H$. Notice that the deviation from minimality is most apparent for the closely-separated motifs and near the helical core. The size of the region where $H$ cannot be neglected increases with $\frac{p}{R}$. \textbf{(Left)} Plot for $p=0.5$ and $R=1.0$. \textbf{(Right)} Plot for $p=0.5$ and $R=4.0$.}
    \label{fig:DeviationMinimalityGraph2helicoid}
\end{figure}

We begin by presenting a somewhat underused representation of minimal surfaces termed \emph{Enneper immersions} due to Andrade \cite{AndradeJAM1998}. In essence, this technique allows us to ``fix'' an approximate minimal surface given as a graph over the plane such that it becomes an exact minimal surface by only exploiting lateral displacements and not varying the height data. Consider a surface given by a harmonic function {$h:\Omega\to\mathbb{R}$ over a domain $\Omega$} in the complex plane
\begin{equation}
\mathbf{r}(z) = (z,h(z)),\qquad z=x+\rmi y\in\Omega \subset \mathbb{C},\qquad \pd_{\bar{z}}\pd_{z}h=0,
\label{eq:harmonic-graph}
\end{equation}
{where we use $\bar{z}=x-\rmi y$, $\pd_{z}\equiv \tfrac{\pd}{\pd z}=\frac{1}{2}(\partial_x-\rmi\partial_y)$, and $\pd_{\bar{z}}\equiv \tfrac{\pd}{\pd \bar{z}}=\frac{1}{2}(\partial_x+\rmi\partial_y)$. For convenience, we will from now on also represent surfaces through complex (non necessarily analytic) functions since in the subsequent calculations we will need the derivatives of $h$ with respect to $z$ and $\bar{z}$ which results in complex functions that are not real valued.} 

The mean curvature $H$ of such a surface reads
\begin{equation}\label{Eq::MeanCurvGraphs}
H=\frac{(1+h_x^2)h_{yy}-2h_xh_yh_{xy}+(1+h_y^2)h_{xx}}{2(1+h_x^2+h_y^2)^{\frac{3}{2}}}=-2\frac{h_z^2h_{\bar{z}\bar{z}}-(1+2h_zh_{\bar{z}})h_{z\bar{z}}+h_{\bar{z}}^2h_{zz}}{(1+4h_zh_{\bar{z}})^{\frac{3}{2}}}.
\end{equation}
As $H=\frac{1}{2}\nabla^2h+\mathcal{O}(\Vert\nabla h\Vert^{2})=\mathcal{O}(\Vert\nabla h\Vert^{2})$, such surfaces are considered minimal within the small slope approximation (where terms of order $\mathcal{O}(\Vert\nabla h\Vert^{2})$ are omitted). A great advantage of this method is that due to the additivity of harmonic functions one can simply ``add'' helical motifs. The surface 
\begin{equation}\label{eq::SingleHelicoid}
\mathbf{r}(z) = (z,p_0\arg(z-z_0)),    
\end{equation}
yields a helicoid of pitch $p_{0}$ centered around $z_{0}$ which is exactly minimal. One can explicitly construct the surface which includes $N$ helical motifs located at the points $\{z_{k}\}_{k=1}^N$ and of corresponding pitches $\{p_{k}\}_{k=1}^N$ through
\[
\mathbf{r}(z) = (z,\mathop{\sum}^{N}_{k=1}p_k\arg(z-z_k)).
\]
The mean curvature of this surface, however, does not vanish identically. (The helicoids and planes are the only minimal surfaces that are also the graph of a harmonic function.) In particular, considering the case of a dipole of pitches $+p$ and $-p$ separated a distance $R$ from each other, say with axes located at $z_1=\frac{R}{2}$ and $z_2=-\frac{R}{2}$, the mean curvature reads
\[
H = -\frac{p^3R^3y}{C(x,y)^{\frac{1}{2}}[C(x,y)+p^2R^2]^{\frac{3}{2}}},\quad \text{where}\quad
\,C(x,y)=[(x-\frac{R}{2})^2+y^2][(x+\frac{R}{2})^2+y^2].
\]
For example, as $x=R/2$ and $y\to 0$ then $H\to -8/R$, or expressed in dimensionless terms $|H/\sqrt{-K}|=2p/R$. We thus see that the deviation from minimality becomes significant whenever the inter-motif separation becomes comparable to their pitch (which is the case in both \cite{TerasakiCell2013} and \cite{BussiPNAS2019}). To amend this we turn to construct a specific type of an Enneper immersion. We begin by stating the general result due to Andrade \cite{AndradeJAM1998}:  

Let  $h:\Omega\subset\mathbb{C}\to\mathbb{R}$ be a harmonic function and  $L,P:\Omega\subset\mathbb{C}\to\mathbb{C}$ be holomorphic. We may define  
\begin{equation}\label{Eq::EnneperGraph}
    X(z)=(L(z)-\overline{P(z)},h(z)),\,z\in\Omega.
\end{equation}
The mapping $X$ is a conformal minimal surface when $L$ and $P$ satisfy
\begin{equation}\label{Eq::CondForEnneperBeMinimal}
    L'(z)P'(z)=\left(\frac{\partial h}{\partial z}\right)^2\quad \mbox{ and }\quad \vert L'(z)\vert+\vert P'(z)\vert\not=0.
\end{equation}
The first condition guarantees that $X$ is conformal, i.e., {the coefficients $a_{11}=\langle \partial_xX,\partial_xX\rangle$, $a_{12}=\langle \partial_xX,\partial_yX\rangle$, and $a_{22}=\langle \partial_yX,\partial_yX\rangle$ of the metric satisfy} $a_{11}=a_{22}$ and $a_{12}=0$, while the second implies that the surface is regular, i.e., $\det (a_{ij})\not=0$. Conversely, every minimal surface can be written as the Enneper graph of a harmonic function. The reader is referred to \cite{AndradeJAM1998} for proof of the general case. Below we address a particular case where $L(z)=z$. 

\subsection{Obtaining exact minimal surfaces from harmonic approximations}

Consider an approximately minimal surface in the form of a harmonic graph \eqref{eq:harmonic-graph}, then the surface 
\begin{equation}\label{eq::EnneperDeformation}
\mathbf{r}(z)=(z-\overline{P(z)},h(z)), \quad \text{where} \quad P(z)=\int\left(\partial_{z} h\right)^2 dz,    
\end{equation}
is an exact minimal surface. Conversely, every minimal surface can be locally parametrized as in equation \eqref{eq::EnneperDeformation}. Moreover, the complex variable $z$ provides a conformal parametrization of the surface $\mathbf{r}$, and the area element $\rmd A$ and Gaussian curvature $K$ read:
\begin{equation}\label{Eq::AreaEnneperGraph}
    \rmd A = (1+\vert\partial_z h\vert^2)^2\rmd x\rmd y = (1+\tfrac{1}{4}\Vert\nabla h\Vert^2)^2\rmd x\rmd y
\end{equation}
and 
\begin{equation}\label{Eq::GaussCurvEnneperGraph}
    K=-  4\frac{\vert \partial_{zz} h\vert^2}{(1+\vert \partial_zh\vert^{2})^{4}}=\frac{h_{xx}h_{yy}-h_{xy}^2}{[1+\frac{1}{4}(h_x^2+h_y^2)]^{4}}.
\end{equation}

It is important to state that the $x,y$ coordinates above are not Cartesian coordinates on the plane, i.e., they do not provide a Monge patch for the surface $\mathbf{r}$, rather they are the image of these coordinates under the transformation $z\mapsto z-\overline{P(z)}$ \footnote{Distinguishing the coordinate systems is exceptionally important in view of the functional similarity between the above result and the expression for the Gaussian curvature of an arbitrary surface given as a graph: 
\[K_{(x,y,h(x,y))}=(h_{xx}h_{yy}-h_{xy}^2)/(1+h_x^2+h_y^2)^2\].}.

We next provide a rigorous proof of these claims. 

\subsubsection{Proof of equations \eqref{eq::EnneperDeformation}, \eqref{Eq::AreaEnneperGraph} and \eqref{Eq::GaussCurvEnneperGraph}}

Any immersion $\mathbf{r}=(L-\overline{P},h)$ can be rewritten as {$\mathbf{r}=\re\int(L'-P',-\rmi(L'+P'),2h_z)\rmd z,$} where the third coordinate was obtained by taking into account that for any real function $h$ it is valid $\rmd h=h_z\rmd z+h_{\bar{z}}\rmd\bar{z}=h_z\rmd z+\overline{h_{z}\rmd z}$, from which follows that $h=\int\rmd h=2\re(\int h_z\rmd z)$. Now, using the known results for holomorphic representations of minimal surfaces, see Appendix \ref{AppHolomRepMinSurf} for details, the necessary and sufficient conditions for $\mathbf{r}=\re(\int\phi_1\rmd z,\int\phi_2\rmd z,\int\phi_3\rmd z)$ to be a minimal immersion are that $\phi_{1},\phi_{2}$ and $\phi_{3}$ are holomorphic,  $\phi_1^2+\phi_2^2+\phi_3^2=0$, and $\vert\phi_1\vert^2+\vert\phi_2\vert^2+\vert\phi_3\vert^2\not=0$, whose validity for Enneper immersions follows from \eqref{Eq::CondForEnneperBeMinimal}. In addition, we obtain that the area and Gaussian curvature of $\mathbf{r}=(z-\bar{P},h)$ are given by equations \eqref{Eq::AreaEnneperGraph} and \eqref{Eq::GaussCurvEnneperGraph}.

For the converse, i.e., showing that any minimal surface is an Enneper immersion, Andrade uses the Uniformization Theorem to pass to the universal covering of $\Sigma$ and then build $L$, $P$, and $h$ \cite{AndradeJAM1998}. Here, we shall provide a more elementary proof to show that every minimal graph $\mathbf{r}(\zeta)=(\zeta,h(\zeta))$ can be written as an Enneper graph $\mathbf{r}(z)=(z-\overline{P(z)},h(z))$ after an appropriate change of coordinates $\zeta\mapsto z(\zeta)$. To find this change of coordinates we proceed as follows. First, note we may assume that neither $h_{\zeta}=0$ nor $h_{\bar{\zeta}}=0$. Otherwise, $h(\zeta)$ would be (anti-)holomorphic,  but no non-constant real function can be (anti-)holomorphic. Now, assuming we are able to reparametrize $\mathbf{r}(\zeta)=(\zeta,h(\zeta))$ as $\mathbf{r}(z)=\mathbf{r}(\zeta(z))=(z-\overline{P(z)},h(z))$ and using  $\zeta=z-\bar{P}$, gives
\[
P_z = (h_z)^2=(h_{\zeta}\zeta_z+h_{\bar{\zeta}}\bar{\zeta}_z)^2=h_{\zeta}^2-2P_zh_{\zeta}h_{\bar{\zeta}}+h_{\bar{\zeta}}^2P_z^2,
\]
which allows us to write
\[
P_z = \frac{1+2h_{\zeta}h_{\bar{\zeta}}\pm\sqrt{1+4h_{\zeta}h_{\bar{\zeta}}}}{2(h_{\bar{\zeta}})^2}\mbox{ and }P_{\bar{z}}=0.
\]

In order to find $P$ for a given minimal graph $\mathbf{r}(\zeta)=(\zeta,h(\zeta))$, we must be able to write a differential equation for $P$ in terms of $\zeta$ and $\bar{\zeta}$. To do that, we use  $P_z=P_{\zeta}\zeta_z+P_{\bar{\zeta}}\bar{\zeta}_z=P_{\zeta}-P_zP_{\bar{\zeta}}$ and $0=P_{\bar{z}}=P_{\zeta}\zeta_{\bar{z}}+P_{\bar{\zeta}}\bar{\zeta}_{\bar{z}}=P_{\bar{\zeta}}-\bar{P}_{\bar{z}}P_{\zeta}$ to find the system of differential equations
\begin{equation}\label{ODE::P_zetaP_zetaBar}
P_{\zeta}=\frac{B}{1-\vert B\vert^2}\mbox{ and }P_{\bar{\zeta}}=\frac{\vert B\vert^2}{1-\vert B\vert^2};\,B(\zeta):=\displaystyle\frac{1+2h_{\zeta}h_{\bar{\zeta}}+\sqrt{1+4h_{\zeta}h_{\bar{\zeta}}}}{2(h_{\bar{\zeta}})^2}.
\end{equation}
Notice, $B$ is the ``plus"  solution for $P_z$ and, consequently, the denominator of the ODE's in equation \eqref{ODE::P_zetaP_zetaBar} is
\[
1-\vert B\vert^2=-\frac{1+4h_{\zeta}h_{\bar{\zeta}}+(h_{\zeta}h_{\bar{\zeta}})^2+(1+2h_{\zeta}h_{\bar{\zeta}})\sqrt{1+4h_{\zeta}h_{\bar{\zeta}}}}{(h_{\zeta}h_{\bar{\zeta}})^2}<0,
\]
where we used that $h_{\zeta}h_{\bar{\zeta}}=\frac{1}{4}\Vert \nabla h\Vert^2$, which implies that the numerator of $\vert B\vert^2-1$ is a sum of positive quantities.

In short, the equations in \eqref{ODE::P_zetaP_zetaBar} provide  necessary conditions for the existence of the coordinate change $\zeta\mapsto\zeta(z)$. It remains to show they are also sufficient. From now on, let $\mathbf{r}(\zeta)=(\zeta,h(\zeta))$ be a minimal graph. Thus, it makes sense to write the system of ODE's in equation \eqref{ODE::P_zetaP_zetaBar}, whose solvability condition is $P_{\bar{\zeta}\zeta}=P_{\zeta\bar{\zeta}}$. Using the definition of $B$, we have
\[
P_{\zeta\bar{\zeta}}-P_{\bar{\zeta}\zeta}=\frac{B_{\bar{\zeta}}+B^2\bar{B}_{\bar{\zeta}}-B\bar{B}_{\zeta}-\bar{B}B_{\zeta}}{(1-\vert B\vert^2)^2}=\frac{(1+4h_{\zeta}h_{\bar{\zeta}})\,H}{2h_{\zeta}(h_{\bar{\zeta}})^2(1-\vert B\vert^2)}=0\Leftrightarrow H=0,
\]
where $H$ is the mean curvature of $\mathbf{r}(\zeta)=(\zeta,h(\zeta))$ as given in equation \eqref{Eq::MeanCurvGraphs}. In conclusion, we can integrate equation \eqref{ODE::P_zetaP_zetaBar} if and only if $\mathbf{r}(\zeta)=(\zeta,h(\zeta))$ is minimal. Thus, after integrating equation \eqref{ODE::P_zetaP_zetaBar} we can find $P(\zeta)$ and define $z$ as $z=\zeta+\overline{P(\zeta)}$. Since the Jacobian of the transformation $\zeta\mapsto z(\zeta)$ is
\[
\frac{\partial(z,\bar{z})}{\partial(\zeta,\bar{\zeta})}=
\left\vert
\begin{array}{cc}
z_{\zeta} & z_{\bar{\zeta}} \\
\overline{z_{\bar{\zeta}}} & \overline{z_{{\zeta}}} \\
\end{array}
\right\vert
=1+\bar{P}_{\zeta}+{P}_{\bar{\zeta}}+\vert\bar{P}_{\zeta}\vert^2-\vert{P}_{\bar{\zeta}}\vert^2 = 1+\bar{P}_{\zeta}=\frac{1}{1-\vert B\vert^2}<0,
\]
we can invert $\zeta\mapsto z(\zeta)$ and write $\zeta(z)=z-\overline{P(z)}$.

It remains to check that $P(z)=P(\zeta(z))$ is holomorphic, $h(z)=h(\zeta(z))$ is harmonic, and $P_z=(h_z)^2$. Notice, if $P(z)$ is holomorphic, then $P_z$ is also holomorphic and we conclude that $h_z=\sqrt{P_z}$ must be holomorphic, which implies $\Delta h=4h_{z\bar{z}}=0$. Thus, it suffices to prove $P(z)$ is holomorphic and $P_z=(h_z)^2$. From $\zeta=z-\bar{P}$, we find the system of equations
\[
P_{\zeta}=P_zz_{\zeta}+P_{\bar{z}}\bar{z}_{\zeta}=(1+\bar{P}_{\zeta})P_z+P_{\zeta}P_{\bar{z}}\mbox{ and }P_{\bar{\zeta}}=P_zz_{\bar{\zeta}}+P_{\bar{z}}\bar{z}_{\bar{\zeta}}=P_z\bar{P}_{\bar{\zeta}}+(1+P_{\bar{\zeta}})P_{\bar{z}}.
\]
Using the ODE's for $P_{\zeta}$ and $P_{\bar{\zeta}}$, we can solve this system and find that $P_z=B(\zeta(z))$ and $P_{\bar{z}}=0$, which in particular implies $P(z)$ must be holomorphic. In addition, using that $P_z=B$ and that $h_z=h_{\zeta}\zeta_z+h_{\bar{\zeta}}\bar{\zeta}_z=h_{\zeta}-\bar{P}_{\bar{z}}h_{\bar{\zeta}}$, we can straightforwardly deduce that $P_z=(h_z)^2$.

In conclusion, given a minimal surface $\Sigma$ parametrized as a graph $\mathbf{r}(\zeta)=(\zeta,h(\zeta))$, we can perform a change of coordinates $\zeta\mapsto z(\zeta)$ and reparametrize $\Sigma$ as an Enneper immersion in the particular form $\mathbf{r}(z)=(z-\overline{P(z)},h(z))$, i.e., we are able to see $\Sigma$ as the deformation of a harmonic graph.

\subsection{Basic example: helicoid}

Since a single helicoid is exactly minimal, one may expect that applying equation (\ref{eq::EnneperDeformation}) to equation (\ref{eq::SingleHelicoid}) would result in a trivial map, i.e., it would give $\mathbf{r}=(z,p_0\arg(z-z_0))$ back. However, an Enneper immersion is necessarily conformal, which is not the case for  $\mathbf{r}=(z,p_0\arg(z-z_0))$. The above intuition is, however, not entirely wrong; the Enneper graph of $h(z)=p_0\arg(z-z_0)$ yields the same helicoid but parametrized via a conformal immersion. 
 
Consider $h(z)=p_0\arg(z-z_0)=p_0\mbox{ Im}\ln(z-z_0)$. The remaining function $P$ composing the Enneper data is given by 
\[
\partial_zh=\frac{p_0}{2\rmi(z-z_0)}\Rightarrow P(z)=\int(\partial_zh)^2=\frac{p_0^2}{4(z-z_0)}.
\]
Thus, the Enneper graph of a single helicoid is $\mathbf{r}=(z-\frac{p_0^2}{4(\zb-\zb_0)},p_0\arg(z-z_0))$. Setting $z~-~z_0=r\rme^{\rmi\theta}$, the surface may be written as 
\[
\vr=\left(r(1-\frac{p_0^2}{4r^{2}})\rme^{\rmi\theta}+z_{0},p_0\arg(z-z_0)\right),
\]
where the distance from the axis is measured by $\rho=r(1-p_0^2/4r^2)$. To be more precise, the parametrization is a double covering of the helicoid since $\rho\in(-\infty,0]$ for $2r\leq\vert p_0\vert$ and $\rho\in[0,\infty)$ for $2r\geq\vert p_0\vert$. Notice that the entire boundary of the disc $r = |p_{0}|/2$ is singular and can be associated with the axis of the helicoid. The large distortion associated with the mapping and the singularities is a consequence of the conformality of the parametrization.  

The two copies of the helicoid can be easily distinguished using the surface's unit normal, which can be expressed as $\mathbf{N}=(p_0^2+\rho^2)^{-\frac{1}{2}}(p_0\sin\theta,p_0\cos\theta,\rho)\in\mathbb{S}^2$. It is immediate to see that in the exterior (interior) of the disc of radius $\frac{|p_{0}|}{2}$ around $z_{0}$ the normal takes values on the north (south) hemisphere of $\mathbb{S}^2$. Finally, from equations (\ref{Eq::AreaEnneperGraph}) and (\ref{Eq::GaussCurvEnneperGraph}), the area of the Enneper graph of a helicoid on the domain corresponding to an annulus of radii $r_2>r_1\geq \frac{\vert p_0\vert}{2}$ and the Gaussian curvature are
\begin{equation}\label{eq::Area1Helicoid}
    \int_{r_1}^{r_2}\int_0^{2\pi}(1+\frac{p_0^2}{4r^2})^2r\rmd r\rmd\theta = \pi(r_2^2-r_1^2)+\pi p_0^2\ln\frac{r_2}{r_1}-\frac{\pi p_0^4}{16}\left(\frac{1}{r_2^2}-\frac{1}{r_1^2}\right)
\end{equation}
and
\begin{equation}\label{eq::GaussCurv1Helicoid}
    K = -\frac{p_0^2}{r^4(1+\frac{p_0^2}{4r^2})^4}=-\frac{p_0^2}{(p_0^2+\rho^2)^2}.
\end{equation}
These results will allow us to compare the surfaces obtained for interacting motifs with that of a single helicoid.

\subsection{Finite collections of helical motifs and the multipole expansion}

\label{Sect::MultipoleExpansion}

The construction of an Enneper graph is associated with a harmonic function, whose singularities may be interpreted as  charges in analogy to point charges in electrostatic. This analogy suggests that we can resort to a multipole expansion by considering helical motifs as point charges. While in the 2d electrostatic the potential associated with a point charge has the logarithmic singularity $V(r,\theta)=V(r)=\ln\frac{1}{r}$, in the context of helical motifs point charges should have an arg-singularity $h(r,\theta)=h(\theta)=\theta$. (Up to constants, $h=\theta$ is the only purely angular 2d harmonic function while $h=\ln r$ is the only purely radial 2d harmonic function.) Given an electric charge density $\mu(x,y)$ entirely contained in a disc $D_R=\{(x,y):\sqrt{x^2+y^2}\leq R\}$, the electrostatic potential outside the disc is harmonic $\Delta V=0$ in $\mathbb{R}^2\backslash D_R$, but inside $D_R$ we have $\Delta V=2\pi\mu$. (We are employing a distinct sign convention that will prove to be more useful when extending the approach to our context.) Outside the disc $D_R$, $V$ has the multipole expansion \cite{JoslinMP1983multipole}
\begin{equation}
    V=p\ln r+\sum_{k=1}^{\infty}\frac{1}{r^k}[a_k\cos(k\theta)+b_k\sin(k\theta)]=\mbox{Re}\left(p\ln z+\sum_{k=1}^{\infty}\frac{c_k}{z^k}\right),
\end{equation}
where $c_k=a_k+\rmi b_k\in\mathbb{C}$ and $p\in\mathbb{R}$. The coefficients $p$, $a_k$, and $b_k$ are written as functions of the charge density $\mu$ according to
\begin{equation}\label{eqCoefMultpExpElectroPotential}
    p=\int_{\mathbb{R}^2}\mu\,\rmd x\rmd y\mbox{ and }c_k=-\frac{1}{k}\int_{\mathbb{R}^2}\mu z^k\rmd x\rmd y.
\end{equation}
Now, considering the imaginary part, the multipole expansion associated with a "helicoidal" charge density $\mu$ is
\begin{equation}\label{EqMultipoleExpHelicCharge}
    h=p\,\theta+\sum_{k=1}^{\infty}\frac{1}{r^k}[b_k\cos(k\theta)-a_k\sin(k\theta)]=\mbox{Im}\left(p\ln z+\sum_{k=1}^{\infty}\frac{c_k}{z^k}\right),
\end{equation}
where the coefficients are the same as in equation (\ref{eqCoefMultpExpElectroPotential}). 

For a single helicoid of pitch $p_0$ located at $0\in \mathbb{C}$, we know that the corresponding harmonic function is $h(r,\theta)=p_0\,\theta=p_0\,\mbox{Im}\ln z$. Thus, a single helicoidal charge $p_0$ is, as expected, associated with the distribution $\mu=p_0\,\delta(z)$, where $\delta$ is the Dirac delta function. (This distribution leads to $c_k=0$.)  On the other hand, for a helicoidal charge $p_0$ located at $z_0\in \mathbb{C}$, we have
$$
    p_0\ln (z-z_0)=p_0\ln z+p_0\ln(1-\frac{z_0}{z})=p_0\ln z-\sum_{k=1}^{\infty}\frac{p_0}{k}\left(\frac{z_0}{z}\right)^k,
$$    
which implies    
\begin{equation}
    h(z)=p_0\,\theta+\sum_{k=1}^{\infty}\frac{p_0}{k}\left(\frac{r_0}{r}\right)^k\sin k(\theta-\theta_0).
\end{equation}
This expression is in agreement with equation (\ref{EqMultipoleExpHelicCharge}) for the distribution $\mu=p_0\delta(z-z_0)$. Indeed,
\begin{equation}
  p_0=\int p_0\delta(z-z_0),-\int\frac{p_0z^k}{k}\delta(z-z_0)=-\frac{p_0z_0^k}{k}\Rightarrow h(z)=\mbox{Im}\left[p_0\ln z-\sum_{k=1}^{\infty}\frac{p_0}{k}\left(\frac{z_0}{z}\right)^k\right]. 
\end{equation}

In general, for a set of helicoidal charges $\{p_j\}$ located at $\{z_j\}_{j=1}^N\subset\mathbb{C}$, the charge density is given by $\mu=\sum_{j=1}^Np_j\,\delta(z-z_j)$. Then, the coefficients in the multipole expansion are
\begin{equation}\label{eq::Coeff2dMultipoleExp}
    p = \int\sum_{j=1}^Np_j\,\delta(z-z_j)=\sum_{j=1}^Np_j \mbox{ and }
    c_k=-\frac{1}{k}\int\sum_{j=1}^N p_jz^k\,\delta(z-z_j)=-\frac{1}{k}\sum_{j=1}^Np_jz_j^k.
\end{equation}
Finally, the corresponding harmonic function is
\begin{equation}
    h(r,\theta) = \Big(\sum_{j=1}^Np_j\Big)\theta+\sum_{k\geq1}\sum_{j=1}^N\frac{p_j}{k}\left(\frac{r_j}{r}\right)^k\sin k(\theta-\theta_j).    
\end{equation}
Notice that this power series converges absolutely as long as $r>r_k$ for all $k$, i.e., for all $z$ outside the disc of radius $\max_k\{r_k\}$. 

This expansion shows that the far-field behavior of the fundamental layer of any finite collection of helical motifs asymptotes to that of a helicoid whose pitch is given by the sum of the pitches of the individual motifs. Several such intertwined simple helicoids may be required to fully cover all leaves of the structure. Note, however, that an infinite collection of helical motifs, as well as motifs that are not perpendicular to the parallel layers they pierce, are not described by this expansion.
 
\section{Concatenating helical motifs}

\begin{figure}[t]
    \centering
    {\includegraphics[width=0.4\linewidth]{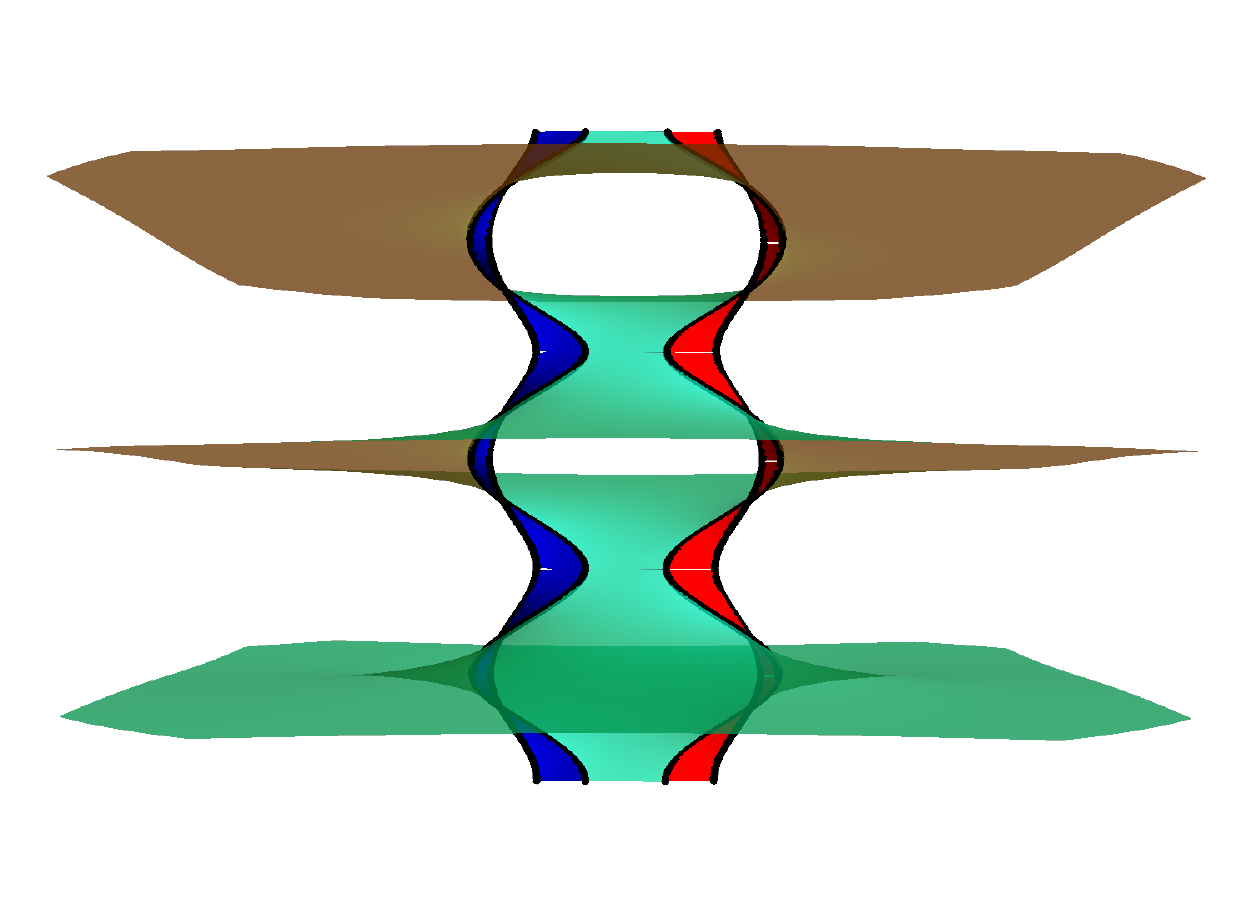}}
    {\includegraphics[width=0.35\linewidth]{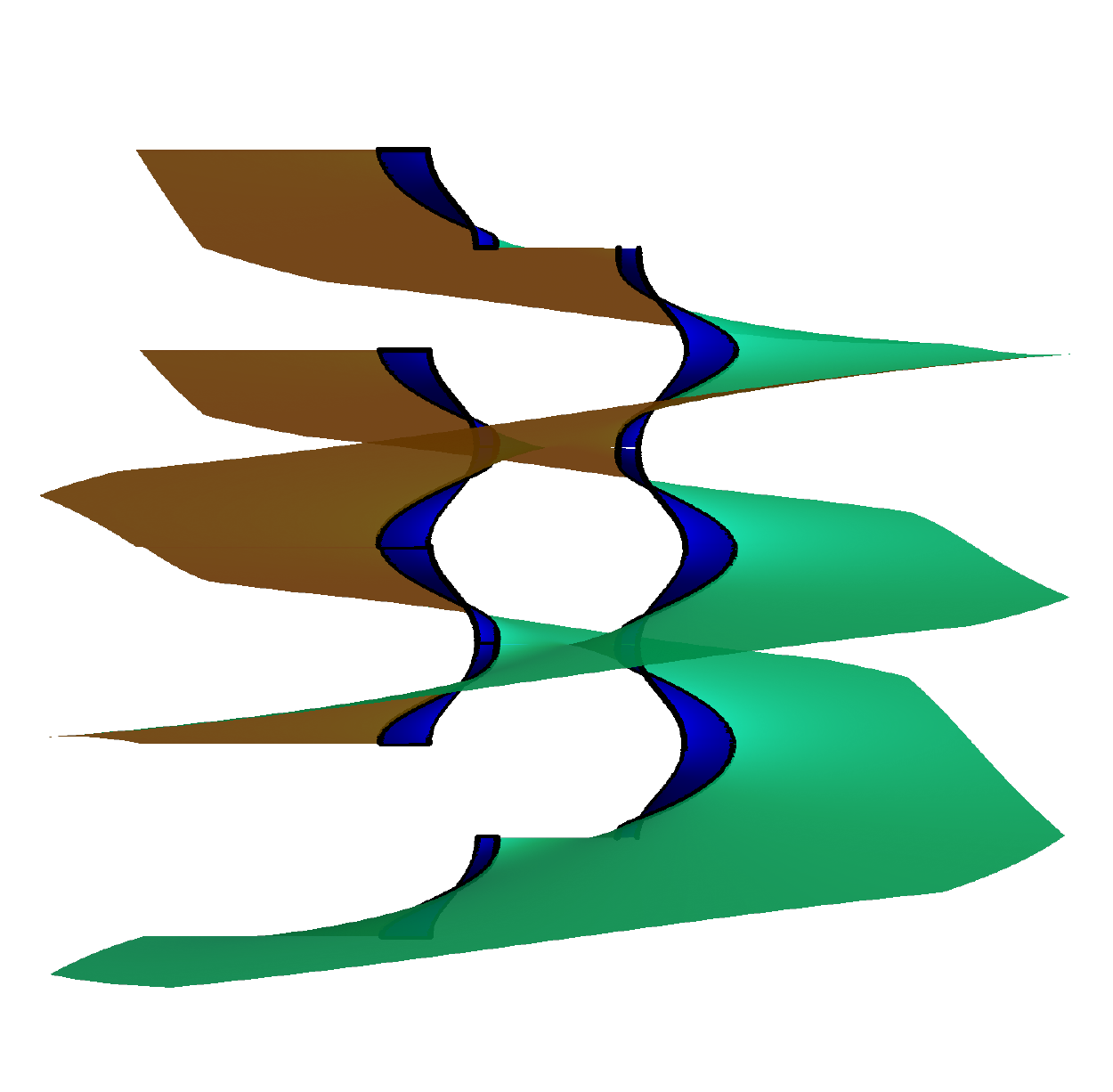}}
    \caption{Layers of a minimal 2-helicoid obtained by concatenating two oppositely handed or two equally handed helical motifs, left and right figures, respectively. (In the figures, $\vert p_{1}\vert=\vert p_{2}\vert=0.15$ and $R=1$.) Inner rims with distinct colors/shading emphasize the distinct handedness of the motifs.}
    \label{fig:2helicoids}
\end{figure}

Let us investigate in some detail the concatenation of two helical motifs, see Fig. \ref{fig:2helicoids}. Consider two helical motifs of pitches $p_1$ and $p_2$ located at $z_{1}$ and $z_{2}$, separated a distance $R$ from each other. The corresponding harmonic function is  given by $h(z)=p_1\arg(z-z_1)+p_2\arg(z-z_2)$, whose graph is only minimal up to leading order in $\Vert\nabla h\Vert$. Without loss of generality, we may assume $z_1=\frac{R}{2}$ and $z_2=-\frac{R}{2}$, where $R=\vert z_2-z_1\vert>0$. Since $\partial_zh=\frac{p_1}{2\rmi(z-z_1)}+\frac{p_2}{2\rmi(z-z_2)}$, integrating $(\partial_zh)^2$ we have
\begin{equation}
    P(z)=\frac{p_1^2}{4(z-\frac{R}{2})}+\frac{p_2^2}{4(z+\frac{R}{2})}+\frac{p_1p_2}{2R}\ln\left(\frac{z+\frac{R}{2}}{z-\frac{R}{2}}\right).
\end{equation}
The first two terms above are the sum of the functions $P(z;\{z_1,p_1\})$ and $P(z;\{z_2,p_2\})$ associated with helicoids around $z_1=\frac{R}{2}$ and $z_2=-\frac{R}{2}$. The surface obtained as the algebraic sum of the Enneper immersions of  $h(z;\{z_1,p_1\})=p_1\arg(z-\frac{R}{2})$ and $h(z;\{z_2,p_2\})=p_2\arg(z+\frac{R}{2})$ is not a minimal surface. The last term in $P(z)$, which can be written as $\frac{p_1p_2}{R}$ times the scale-independent function $\frac{1}{2}\ln[(\zeta+\frac{1}{2})/(\zeta-\frac{1}{2})]$, $\zeta=z/R$, thus introduces a pair-interaction between the two helicoids required to make the corresponding parametrization a conformal minimal immersion. {It is important to note that, in order to obtain a periodic layered surface, the pitches $p_1$ and $p_2$ should be commensurate.

In addition, the stereographic projection of the unit normal of a minimal 2-helicoidal parking garage is given by (see  Appendix \ref{AppHolomRepMinSurf} for the relation between the normal $\mathbf{N}$ and the Enneper data)
\begin{equation}
    g = -\left[\frac{p_1}{2\rmi(z-\frac{R}{2})}+\frac{p_2}{2\rmi(z+\frac{R}{2})}\right]^{-1} = -2\rmi R\frac{(\frac{z}{R})^2-\frac{1}{4}}{(p_1+p_2)\frac{z}{R}+\frac{1}{2}(p_1-p_2)}. \label{EqGaussMap2Helicoid}
\end{equation}
As the example of a single helicoid teaches us, we should pay attention in selecting the correct domain for a minimal 2-helicoid. The analysis of $g$ will help us identify the proper domain for the Enneper graph of a parking garage and we shall consider as the domain of the Enneper graph  the points $z\in\mathbb{C}$ where the unit normal $\mathbf{N}$ takes values on the North hemisphere: $\Omega_N=\{z\in\mathbb{C}:\vert g(z)\vert\geq 1\}$. The curves $\vert g\vert=1$, i.e., the boundary of $\Omega_N$,  (Fig. \ref{fig:LevelSets2-HelGaussMap}) will be associated with the helices of each helical motif, as explained in the following subsections.

\subsection{Concatenating two identical helical motifs}

\begin{figure}
    \centering
{\includegraphics[width=0.8\linewidth]{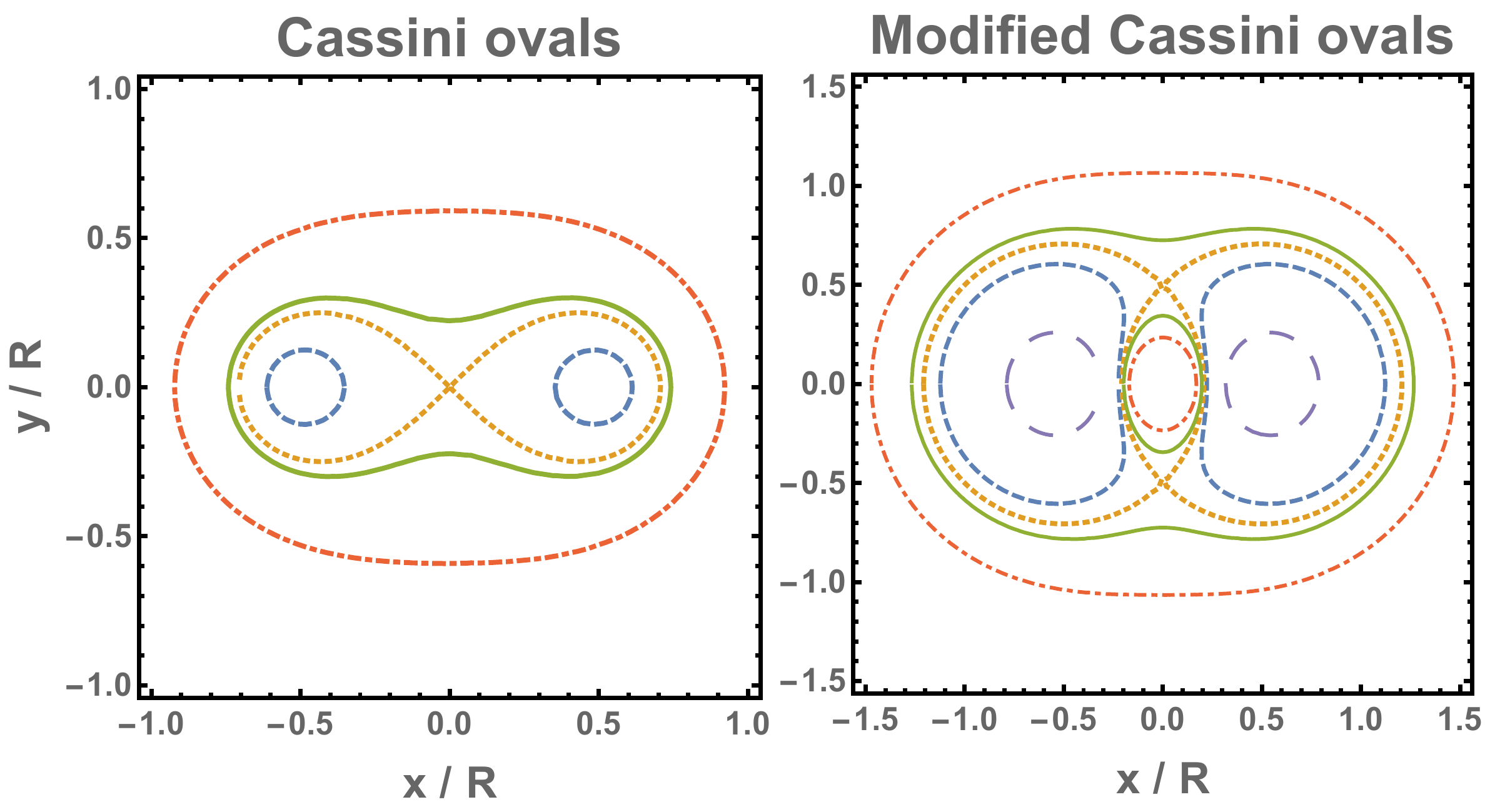}}
    \caption{Level curves of the mapping $z\mapsto\vert g(z)\vert$ associated with two helical motifs: \textbf{(Left)} For a minimal dipole, the curves are Cassini ovals with equation $\rho^4-\frac{1}{2}\rho^2\cos(2\theta)+\frac{1}{16}=\frac{\tilde{p}^2}{4}$, where $\tilde{p}=\vert g\vert\frac{p}{R}$ and $\rho=\frac{r}{R}$. Qualitatively, there are 4 types of shapes. (i) $\tilde{p}^2<\frac{1}{4}$: two disconnected and symmetric closed curves (dashed blue line). (ii) $\tilde{p}^2=\frac{1}{4}$: a  Lemniscate of Bernoulli, i.e., it has the shape of an 8 (dotted yellow line). (iii)  $\frac{1}{4}<\tilde{p}^2<1$: a peanut-shaped curve (thick green line). (iv) $\tilde{p}^2>1$: a closed convex curve (dotted-dashed red line); \textbf{(Right)} For a symmetric minimal non-dipole, the curves are modified Cassini ovals with equation $\rho^4-\frac{1}{2}\rho^2\cos(2\theta)+\frac{1}{16}=\tilde{p}^2\rho^2$, where $\tilde{p}=\vert g\vert\frac{p}{R}$ and $\rho=\frac{r}{R}$. Qualitatively there are 5 types of shapes. (i) $\tilde{p}^2<\frac{-1+\sqrt{5}}{2}$: two disconnected and symmetric closed convex curves (large-dashed violet line). (ii) $\frac{-1+\sqrt{5}}{2}<\tilde{p}^2<1$: two disconnected and symmetric closed non-convex curves (dashed blue line). (iii) $\tilde{p}^2=1$: the curves degenerate to two closed curves touching at two points (dotted yellow line). (iv) $1<\tilde{p}^2<\frac{1+\sqrt{5}}{2}$: two asymmetric pieces, a peanut-shaped curve and an inner closed convex curve (full green line). (v) $\tilde{p}^2>\frac{1+\sqrt{5}}{2}$: two disconnected and asymmetric  closed convex curves (dotted-dashed red line).}
    \label{fig:LevelSets2-HelGaussMap}
\end{figure}

For simplicity, we start by considering the symmetric problem of gluing two identical motifs such that $p_1=p_2=p$. For large distances (as we prove in Subsection \ref{Sect::EnneperImmersions}.\ref{Sect::MultipoleExpansion}) the resulting surface is well approximated by two intertwined and identical copies of helicoids of pitch $2p$ displaced a distance $p$ along the helicoids' axis with respect to each other, Fig. \ref{fig:2helicoids} (Right). The near field structure is, however, non-trivial.

The Gauss map in this case reads $g=-\rmi\frac{R}{p}[\frac{z}{R}-\frac{1}{4(z/R)}]$, from which follows that for every $w\in\mathbb{C}$, the equation $g(z)=w$ will have in general two distinct solutions. This means that the unit sphere is covered twice by the unit normal and, therefore, the total Gaussian curvature should be $-8\pi$. However, selecting as the domain of definition the points $z$ where the unit normal takes values on the North hemisphere, $\Omega_N$, the correct value of the total curvature of a minimal non-dipole is $-4\pi$, i.e., $-2\pi$ for each helical motif. The equator of the unit sphere is sent by the stereographic projection of the unit normal on the curve $z\mapsto\vert g(z)\vert=1$. Notice that
\begin{equation}
    \vert g\vert^2 = \frac{1}{(p/R)^2(r/R)^2}\left[\left(\frac{r}{R}\right)^4-\frac{1}{2}\left(\frac{r}{R}\right)^2\cos2\theta+\frac{1}{16}\right]=\frac{C(\frac{r}{R},\theta)}{(\frac{p}{R})^2(\frac{r}{R})^2},
\end{equation}
where $C(\rho,\theta)=\rho^4-\frac{1}{2}\rho^2\cos2\theta+\frac{1}{16}$ is a function whose level sets are the well known Cassini ovals (see Fig. \ref{fig:LevelSets2-HelGaussMap}, Left plot). It follows that the level curves of $\vert g\vert^2$ are modified Cassini ovals that can be analytically described in terms of fourth-degree polynomial curves whose shape is depicted in Fig. \ref{fig:LevelSets2-HelGaussMap} (Right plot). For small pitches, the region $\mathbb{C}-\Omega_N$ is made of two disconnected pieces around $z_1$ and $z_2$. However, when the pitches increase, the level sets display a topological transition at $p_c\equiv R$ and the complement of the domain $\Omega_N$ is no longer disconnected. The curves $\vert g\vert=1$, whose shape depends on $p/R$, parametrize the axes of the two helical motifs. By increasing $p/R$, we note that under the deformation in equation (\ref{eq::EnneperDeformation}) the two axes move toward each other and, consequently, the distance between them is smaller than the initial separation $R$ of the undistorted surface, see Fig. \ref{fig:effDistAxesAndInclinationAntiDip} (Center). For $p<p_c$, we may compute the points of closest approach of the two axes from the points of closest approach on the distinct connected components of $\vert g(z)\vert=1$. For the critical value $p=p_c$ the two axes finally intersect. In addition to getting closer along the line containing the motifs, the two axes also incline away from the line connecting  the motifs but in opposite senses, see Fig. \ref{fig:effDistAxesAndInclinationAntiDip} (Left). {Finally, the vertical separation between neighboring layers in the periodic structure remains unchanged during the deformation process as the height data provided by the harmonic function $h$ is preserved.}

To describe the geometry of the helical core in the symmetric case, we may compute the gradient and Hessian of $h(z)=p\arg(z+\frac{R}{2})+p\arg(z-\frac{R}{2})$. The gradient and Hessian of $h$ are
\begin{equation}
    \Vert\nabla h\Vert^2 =\frac{4\left(\frac{p}{R}\right)^2\left(\frac{r}{R}\right)^2}{\left[(\frac{r}{R})^2\cos2\theta-\frac{1}{4}\right]^2+(\frac{r}{R})^4\sin^22\theta}=4\left(\frac{p}{R}\right)^2\frac{\left(\frac{r}{R}\right)^2}{C(\frac{r}{R},\theta)}
\end{equation}
and
\begin{equation}
    \mbox{hess}\, h = -4\left(\frac{p}{R^2}\right)^2\frac{\left(\frac{x^2-y^2}{R^2}+\frac{1}{4}\right)^2+4\frac{x^2y^2}{R^4}}{\left[\left(\frac{x^2-y^2}{R^2}-\frac{1}{4}\right)^2+4\frac{x^2y^2}{R^4}\right]^2}=-4\left(\frac{p}{R^2}\right)^2\frac{C(\frac{r}{R},\theta+\frac{\pi}{2})}{C(\frac{r}{R},\theta)^2}.
\end{equation}
Therefore, from equation (\ref{Eq::GaussCurvEnneperGraph}), the Gaussian curvature of a minimal pair of identical motifs is
\begin{equation}\label{eq::GaussCurvAntiDipole}
    K =-\frac{4}{R^2}\left(\frac{p}{R}\right)^2\frac{C(\frac{r}{R},\theta+\frac{\pi}{2})C(\frac{r}{R},\theta)^2}{\left[C(\frac{r}{R},\theta)+\left(\frac{p}{R}\right)^2\left(\frac{r}{R}\right)^2\right]^4}\stackrel{r\gg R}{\approx}-\frac{4}{R^2}\left(\frac{p}{R}\right)^2\left(\frac{r}{R}\right)^{-4}=-\frac{4\, p^{2}}{r^{4}}.
\end{equation}
The infinitesimal area element of a minimal pair of identical motifs is
\begin{equation}
    \rmd A= \left[1+\frac{\left(\frac{p}{R}\right)^2\left(\frac{r}{R}\right)^2}{\left[\frac{x^2-y^2}{R^2}-\frac{1}{4}\right]^2+4\frac{x^2y^2}{R^4}}\right]^2 \rmd x\,\rmd y= R^2\left(\frac{r}{R}\right)\left[1+\frac{\left(\frac{p}{R}\right)^2\left(\frac{r}{R}\right)^2}{C(\frac{r}{R},\theta)}\right]^2 \rmd \left(\frac{r}{R}\right)\rmd \theta.
\end{equation}
It follows that the area of an annulus with large enough radii $r_{2}>r_{1}\gg R$ is given by
\begin{equation}
    \int_{r_1}^{r_2}\rmd A \approx \pi (r_2^2-r_1^2)+4\pi p^2\ln\frac{r_2}{r_1}+\mathcal{O}(\frac{1}{(r/R)^2}).
\end{equation}
Comparison with equation (\ref{eq::Area1Helicoid}) shows that, up to a small correction, the area of a minimal pair of identical motifs behaves for large distances as the area of a single helicoid of pitch $p_0=2p$. This is compatible with the expectation that a minimal pair of helical motifs can be well approximated by a helicoid of pitch $p_0=p_1+p_2$. (See the multipole expansion developed in Subsection \ref{Sect::EnneperImmersions}.\ref{Sect::MultipoleExpansion}.) Notice that the same is also valid for the behavior of the Gaussian curvature. Indeed, $K(r\gg R)\approx -4p^2/r^4$, which is precisely the asymptotic behavior of the Gaussian curvature of a helicoid of pitch $p_0=2p$, as confirmed by equation (\ref{eq::GaussCurv1Helicoid}).

\begin{figure}[t]
    \centering
{\includegraphics[width=\linewidth]{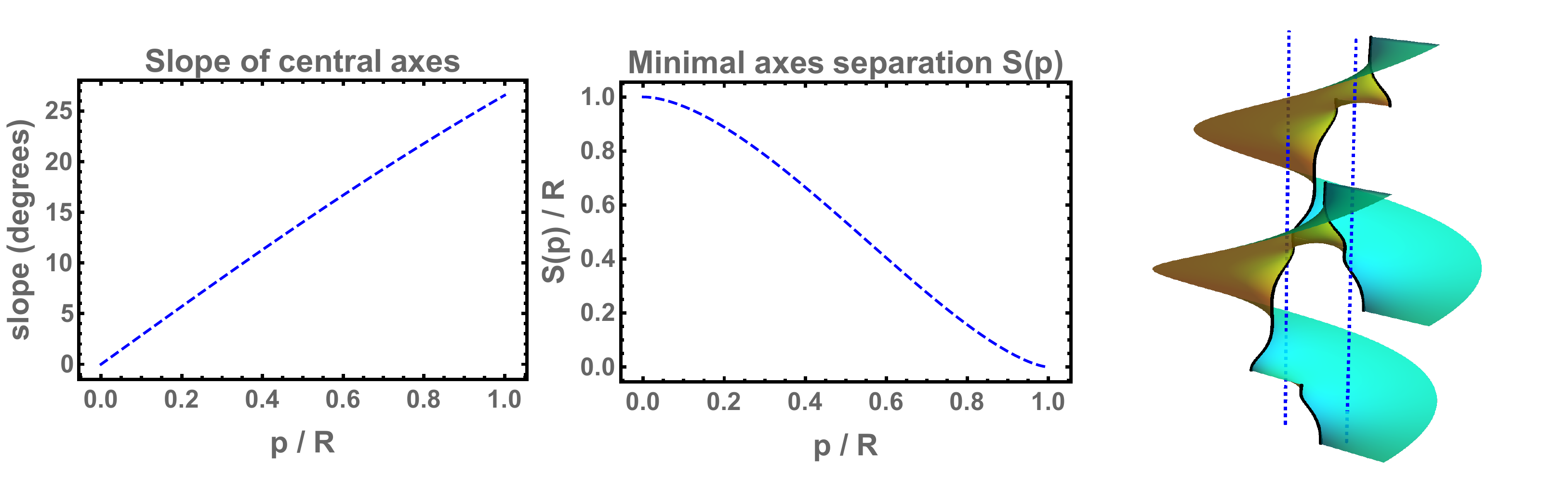}}
    \caption{\textbf{(Left)} The averaged inclination, with respect to the $z$-axis, of the two central axes of a minimal pair of equally handed helical motifs as a function of the pitch $p$ and initial separation $R$. The axes incline in opposite directions, toward and against the positive $y$-axis, which is orthogonal to the line containing the motifs. \textbf{(Center)} Minimal separation, $S(p)$, between the two axes of a minimal pair of equally handed helical motifs given by the closest points as a function of the pitch $p$ and initial separation $R$. \textbf{(Right)} Two layers of two equally handed motifs. The dashed blue lines represent the helical axes before the deformation. (In the picture, $p=0.5$ and $R=1$.)}
    \label{fig:effDistAxesAndInclinationAntiDip}
\end{figure}

\subsection{Minimal helicoidal dipole}

\begin{figure}[t]
    \centering
    {\includegraphics[width=\linewidth]{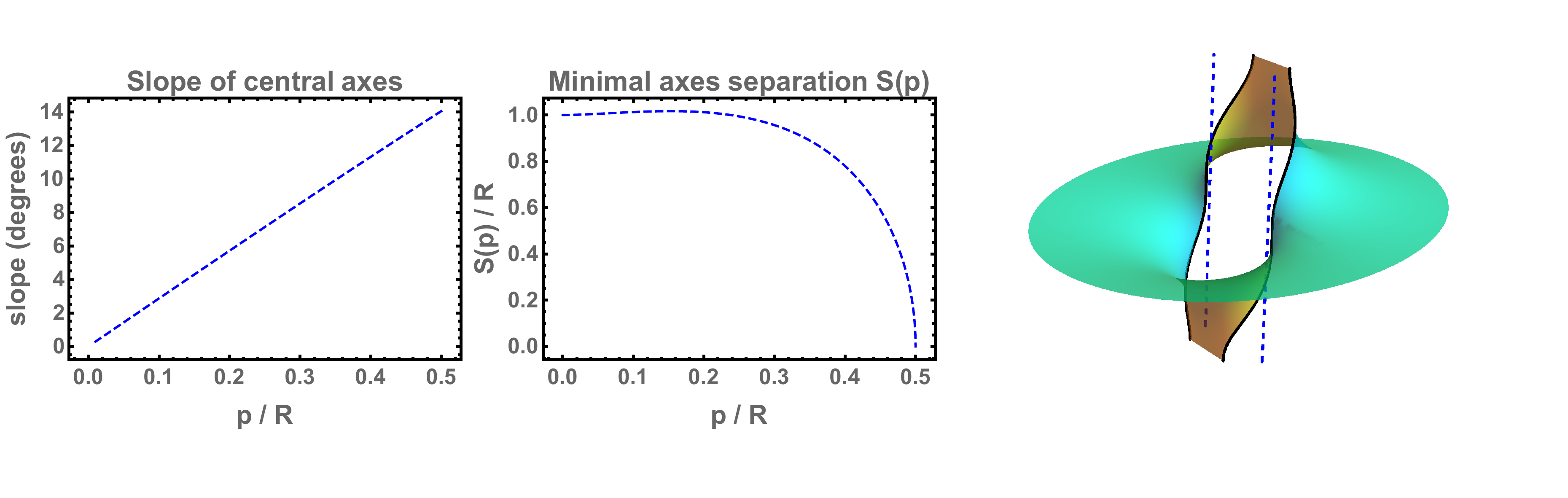}}
    \caption{\textbf{(Left)} The averaged inclination, toward the $y$-direction and with respect to the $z$-axis, of the two central axes of a minimal dipole as a function of the pitch $p$ and initial separation $R$. \textbf{(Center)} Minimal separation, $S(p)$, between the two axes of a minimal dipole given by the closest points as a function of the pitch $p$ and initial separation $R$. \textbf{(Right)} One layer of a minimal dipole. The dashed blue lines represent the helical axes before the deformation. (In the picture, $p=0.45$ and $R=1$.)}
    \label{fig:effDistAxesAndInclinationDip}
\end{figure}

Now, assume that the total pitch $p_0=p_1+p_2$ vanishes. Then, writing $p=p_1=-p_2$, the Gauss map becomes $g=-2\rmi\frac{R}{p}[(z/R)^2-1/4]$. This implies that the unit sphere is covered twice and, therefore, selecting as the domain of definition the points $z$ where the unit normal takes values on the North hemisphere, $\Omega_N$, the total Gaussian curvature of a minimal dipole is $-4\pi$, i.e., $-2\pi$ for each helical motif. In addition, noticing that
\begin{equation}
    \vert g\vert^2 = \frac{4R^2}{ p^2}C(\frac{r}{R},\theta),
\end{equation}
the curves defined by the level sets of $\vert g\vert^2$ are the well known Cassini ovals. Thus, the shape of the curve associated with the equator of $\mathbb{S}^2$, i.e., $z\mapsto\vert g(z)\vert=1$, is completely determined by the parameter ${\tilde{p}}=\vert p\vert/R$, as described in Fig. \ref{fig:LevelSets2-HelGaussMap} (Left plot). Here the curves $\vert g\vert=1$, whose shape depends on $p/R$, parametrize the axes of the two helical motifs. Under the deformation in equation (\ref{eq::EnneperDeformation}) the central part of two axes of a dipole pair do not converge toward each other upon increasing $p/R$, but their extremities do. For $p<p_c\equiv\frac{R}{2}$, the points of closest approach of the two axes can be computed from the points of closest approach on the distinct connected components of $\vert g(z)\vert=1$, see Fig. \ref{fig:effDistAxesAndInclinationDip} (Center plot). The level curves of $\vert g(z)\vert$ have a topological transition at $p_c=\frac{R}{2}$ and it follows that for $p>p_c$ the two axes of the Enneper graph of two oppositely handed helical motifs of pitch $p$ and $-p$ merge into a single smooth curve. In addition, the two axes effectively incline in the direction orthogonal to the line containing the motifs (both in the same sense), see Fig. \ref{fig:effDistAxesAndInclinationDip} (Left plot). {Finally, the vertical separation between neighboring layers in the periodic structure remains unchanged during the deformation process.}

As in the previous case, to describe the geometry of the helical core in the dipole case, we may compute the gradient and Hessian of $h(z)=p\arg(z+\frac{R}{2})-p\arg(z-\frac{R}{2})$. The gradient and Hessian of $h$ are
\begin{equation}
    \Vert\nabla h\Vert^2 = \left(\frac{p}{R}\right)^2 \frac{1}{\left[\left(\frac{r}{R}\right)^2-\frac{1}{4}\right]^2+\left(\frac{r}{R}\right)^2\sin^2\theta}=\left(\frac{p}{R}\right)^2 \frac{1}{C(\frac{r}{R},\theta)}
\end{equation}
and
\begin{equation}
    \mbox{hess}\, h = -\frac{4}{R^2} \left(\frac{p}{R}\right)^2 \frac{\left(\frac{r}{R}\right)^2}{\Big\{[(\frac{r}{R})^2-\frac{1}{4}]^2+\left(\frac{r}{R}\right)^2\sin^2\theta \Big\}^2}=-\frac{4}{R^2} \left(\frac{p}{R}\right)^2 \left(\frac{r}{R}\right)^2\frac{1}{C(\frac{r}{R},\theta)^2}.
\end{equation}
Therefore, from equation (\ref{Eq::GaussCurvEnneperGraph}), the Gaussian curvature of a minimal dipole is
\begin{equation}\label{eq::GaussCurvDipole}
    K =-\frac{4}{R^2} \left(\frac{p}{R}\right)^2\left(\frac{r}{R}\right)^2 \frac{C(\frac{r}{R},\theta)^2}{[C(\frac{r}{R},\theta)+\frac{1}{4}(\frac{p}{R})^2]^4}\stackrel{r\gg R}{\approx}-\frac{4}{R^2}\left(\frac{p}{R}\right)^2\left(\frac{r}{R}\right)^{-6}.
\end{equation}
Notice that $K$ decays to zero faster than the Gaussian curvature of a minimal non-dipole. On the other hand, the infinitesimal area of a minimal dipole is
\begin{equation}
    \rmd A=\left[1+\frac{1}{4C(\frac{r}{R},\theta)}\left(\frac{p}{R}\right)^2\right]^2 \rmd x\,\rmd y=R^2\left(\frac{r}{R}\right)\left[1+\frac{1}{4C(\frac{r}{R},\theta)}\left(\frac{p}{R}\right)^2\right]^2 \rmd\left(\frac{r}{R}\right)\rmd \theta.
\end{equation}
It follows that the area of an annulus with large enough radii $r_2>r_1\gg R$ is given by
\begin{equation}
    \int_{r_1}^{r_2}\rmd A \approx \pi (r_2^2-r_1^2)+\mathcal{O}(\frac{1}{(r/R)^2}).
\end{equation}
The non-trivial contribution is nothing but the area of a planar annulus.  This is compatible with the fact that for large distances, a minimal pair of opposite helical motifs is well approximated by a single helicoid of pitch $p_0=p-p=0$, i.e., a minimal dipole is approximately a plane at large distances. (See the multipole expansion developed in Subsection \ref{Sect::EnneperImmersions}.\ref{Sect::MultipoleExpansion}.)

\subsection{Gluing finitely many helical motifs}

For an arbitrary number of helicoids of pitches $p_1,\dots,p_N$ located at $z_1,\dots,z_N$, we may consider the harmonic function 
\begin{equation}\label{eq::hOfManyMotifs}
    h(z) = \sum_{k=1}^N p_k\im[\ln(z-z_k)]= \sum_{k=1}^N p_k\arg(z-z_k)\Rightarrow \frac{\partial h}{\partial z}=\sum_{k=1}^N\frac{p_k}{2\rmi}\frac{1}{z-z_k}.
\end{equation}
Now, since $P'=(\partial_zh)^2$, we have after integration
\begin{equation}
    P(z) = \frac{1}{4}\sum_k\frac{p_k^2}{z-z_k}-\frac{1}{2}\sum_{k<j}\frac{p_jp_k}{z_j-z_k}\ln\left(\frac{z-z_j}{z-z_k}\right)+\mbox{const. }.
    \label{eq:PofMany}
\end{equation}
Each term in the first sum above corresponds to the  function $P(z;\{z_k,p_k\})$ associated with a single helicoid of pitch $p_k$ located at $z_k$. The remaining terms can be seen as pair interactions between distinct helicoids required to assure that the corresponding parametrization is a conformal minimal immersion.

The stereographic projection of the layer normal $\mathbf{N}$ is given by
\begin{equation}
    g = -\left(\sum_{k=1}^N\frac{p_k}{2\rmi}\frac{1}{z-z_k}\right)^{-1}=-2\rmi\,\frac{z^N-a_1z^{N-1}+a_2z^{N-2}\dots+(-1)^Na_N}{d_1z^{N-1}-d_2z^{N-2}+\cdots+(-1)^{N-1}d_{N}}. 
\end{equation}
The coefficients $a_k$ and $d_k$ are computed from $\{p_j\}$ and $\{z_j\}$ as
\[
 a_k=s_k(z_1,\dots,z_N)\mbox{ and }d_k=\displaystyle\sum_{j=1}^N p_js_{k-1}(z_1,\dots,z_{j-1},z_{j+1},\dots,z_N),
\]
where $s_k$ denotes the $k$-th symmetric polynomial, $s_0(x_1,\dots,x_m)=1$ and $s_k(x_1,\dots,x_m)=\sum_{1\leq i_1<\dots<i_k\leq m}x_{i_1}\cdots x_{i_k}$. 

\subsection{Twist grain boundary - linear chain of infinitely many helical motifs}

Twist grain boundaries (TGB) are structures composed of an infinite collection of identical helical motifs evenly spaced along a straight line. These structures have been studied extensively in the context of smectic liquid crystals \cite{RennPRA1988}, where the individual helical motifs are termed screw dislocations after their crystalline cousins. Unlike individual motifs of finite charge which asymptote to parallel planes, TGB's asymptote to two different families of parallel planes on the two sides of the TGB line, Fig. \ref{fig:TGB} (Left). The two families of planes form a finite  angle with respect to one another, whose magnitude is determined by the ratio of inter-motif distance and its pitch. 

The distinct layers in a smectic structure are often modeled within the small slope approximation as level sets of the phase field  \cite{SantangeloPRL2006}
\[
\phi(x_1,x_2,x_3) = \lambda x_3-\frac{b}{2\pi}\mbox{Im}\ln\sin\left[\frac{\pi(x_1+\rmi x_2)}{\ell_d}\right].
\]
Equivalently, individual layers may be identified with the graph of the harmonic function 
\[
h(z=x+\rmi y) = \frac{b}{2\pi\lambda}\mbox{Im}\ln\sin\left(\frac{\pi z}{\ell_d}\right).
\] 
The resulting surface is minimal only to order $\mathcal{O}(\Vert\nabla h\Vert^2)$. Thus this approximation was primarily employed to address small angle variations in the surfaces normals, and TGB's comprised of helical motifs that are well separated compared with their pitch. An exact minimal  TGB surface allows estimating the elastic energy in cases where the inter-motif distance is comparable to the pitch. 

{Twist grain boundaries have an exact formulation provided by the family of Scherk doubly periodic minimal surfaces. In addition, it can be also shown that this surface can be seen as an infinite superposition of helicoid-like surfaces \cite{KamienAML2001}, though each term appearing in the superposition is not exact minimal. However, it is not clear how to generalize Scherk's construction to a family of helical motifs of different pitch values as well as to finitely many helical motifs, as done in the previous subsections. To illustrate the robustness of our method, here we construct a minimal TGB as an Enneper graph\footnote{{Our minimal TGB has a strong resemblance with the doubly periodic Scherk surface. Since they are similarly doubly periodic, they should coincide as a consequence of the topological characterization of Scherk minimal surfaces \cite{Lazard-HollyIM2001}.}} and, later, we also consider a minimal Untwisted Grain Boundary (UtGB), i.e., an infinite chain of evenly spaced helical motifs of alternating handedness, see Fig. \ref{fig:TGB} right.}

To achieve an exact formulation in the most transparent form, we start from considering the Enneper immersion of a linear stack of infinitely many helical motifs of pitch $p$,  evenly spaced a distance $\ell_d$ from each other: $ h(z) = \sum_{k=-\infty}^\infty p\,\im[\ln(z-k \ell_d)]$. Differentiating we obtain 
\begin{equation}
\frac{\partial h}{\partial z}=\sum_{k=-\infty}^\infty\frac{p}{2\rmi}\frac{1}{z-k \ell_d}=
\frac{p}{2\rmi}\left(\sum_{k=1}^\infty\frac{2z}{z^2-k^2 \ell^2_d}+\frac{1}{z}\right)=
\frac{p}{2\rmi}\frac{\pi}{\ell_d}\cot(\frac{\pi z}{\ell_d}),
\end{equation}
where we used the identity $\pi\cot(\pi z)=z^{-1}+2z\sum_{n\geq1}(z^2-n^2)^{-1}$. Squaring and integrating we obtain the correction term to produce the desired Enneper immersion
\[
P(z)=\int \left( \frac{\partial h}{\partial z}\right)^2 \rmd z=-\frac{p^2}{4}\int \frac{\pi^2}{\ell_d^2} \left(\csc^2(\frac{\pi z}{\ell_d})-1\right) \rmd z=\frac{p^2}{4}\frac{\pi}{\ell_d}\cot(\frac{\pi z}{\ell_d})+\frac{p^2}{4}\frac{\pi^2}{\ell_d^2} z.
\]
\begin{figure}[t]
    \centering
    {\includegraphics[width=0.35\linewidth]{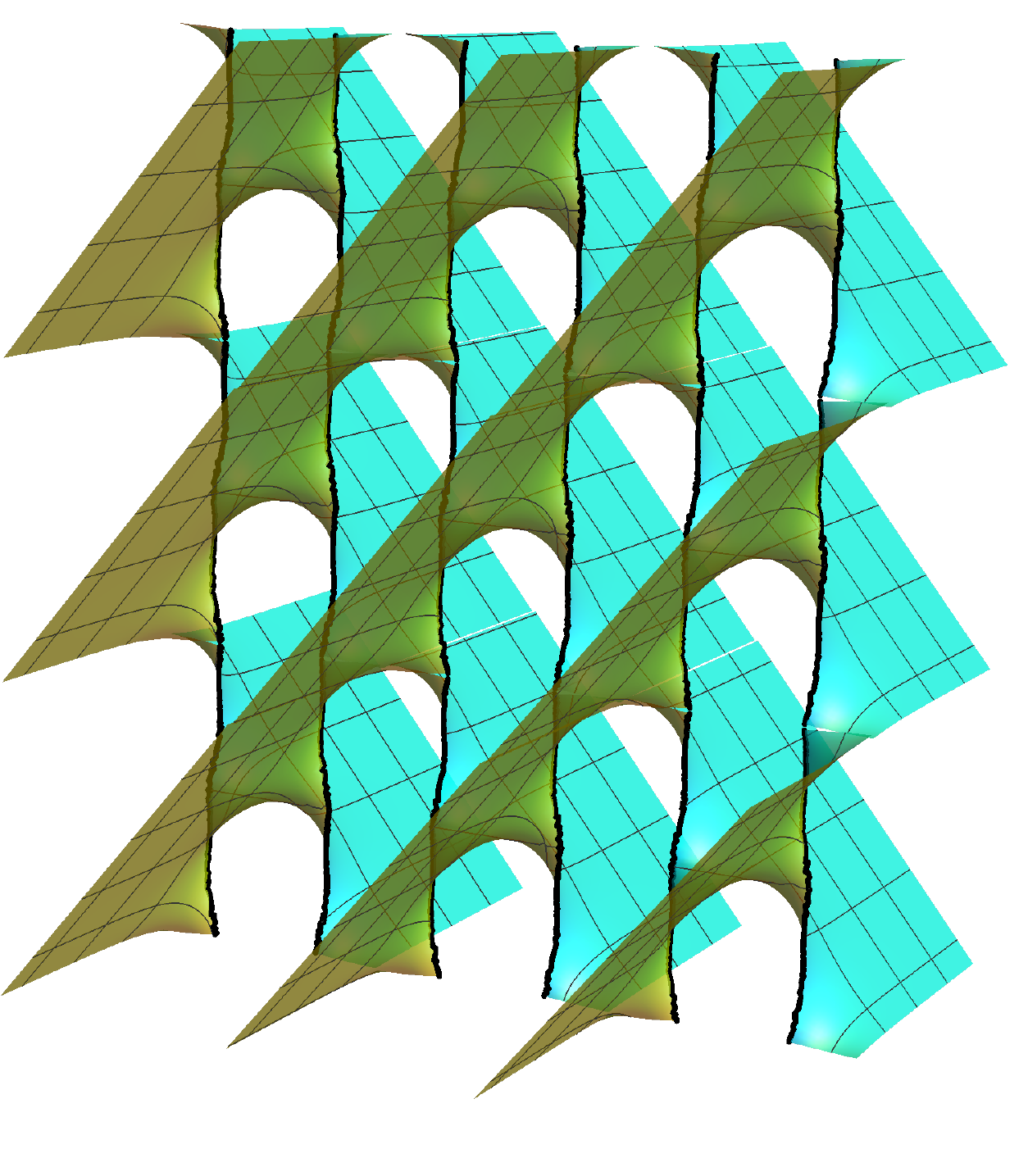}}\,\,\,{\includegraphics[width=0.4\linewidth]{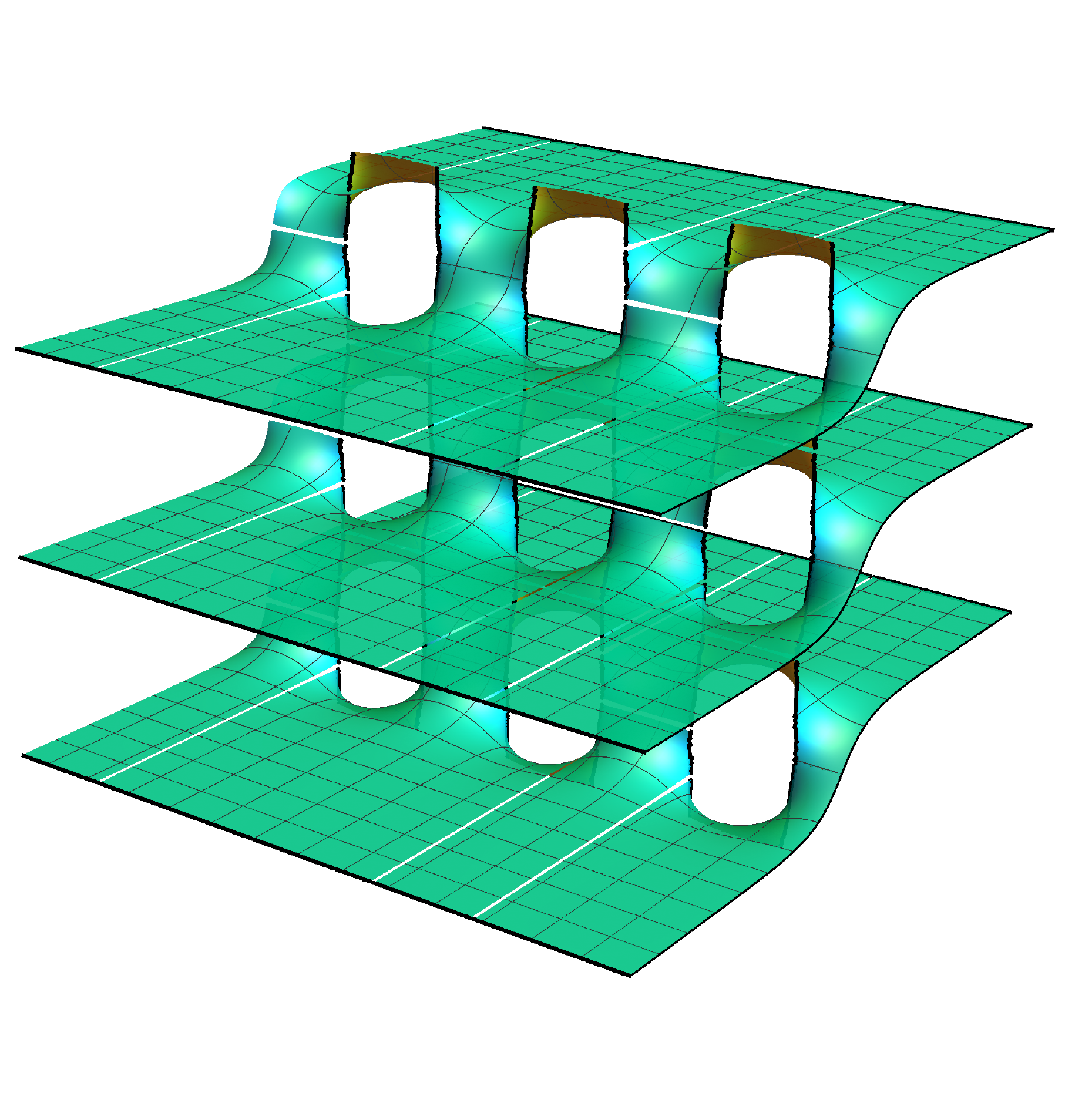}}
    \caption{\textbf{(Left)} Layers of a minimal Twist Grain Boundary (TGB) obtained by concatenating infinitely many helical motifs of same pitch $p$. \textbf{(Right)} Layers of a minimal Untwisted Grain Boundary (UtGB) obtained by concatenating infinitely many helicoidal dipoles of pitches $p$ and $-p$. (In the figure, $p=0.3$ and $\ell_d=1$.)}
    \label{fig:TGB}
\end{figure}
Alternatively, we can rewrite $P(z)$ as an infinite sum of {the functions $P(z;\{z_k,p_k\})$ associated with} helicoids of pitch $p$ located at $\{n\ell_d:n\in\mathbb{Z}\}$ plus a linear correction:
\begin{equation}
    P(z) = \sum_{n\in\mathbb{Z}}\frac{p^2}{4(z-n\ell_d)}+\frac{p^2\pi^2}{4\ell_d^2}z.
\end{equation}

The stereographic projection of the layer unit normal is
\begin{equation}
    g=-\frac{1}{\partial_zh}=\frac{2\ell_d}{p\pi\rmi}\tan\left(\frac{\pi z}{\ell_d}\right).
\end{equation}
As in the concatenation of finitely many helical motifs, here the level sets of $\vert g\vert$ help to find the proper domain to compute the Enneper graph of a minimal TGB (see Fig. \ref{fig:LevelSetsTGBGaussMap}, left). In addition, the image of the surface unit normal $\mathbf{N}$ over the fundamental domain $\Omega_0=\{(x,y)\in[0,2\ell_d]\times\mathbb{R}:\vert g(x+\rmi y)\vert\geq1\}$ covers the unit sphere exactly twice, which results in a total curvature of $\int_{\Omega_0}K\rmd A=-4\pi$: one copy comes from $\Omega_0\cap[0,\ell_d]\times\mathbb{R}$ and another from $\Omega_0\cap[\ell_d,2\ell_d]\times\mathbb{R}$. Since there are infinitely many helical motifs periodically distributed on a line, the total curvature of a minimal TBG is infinite. The Gaussian curvature $K$ is computed from the first and second derivatives of $h$. The second derivative is $\partial^2_zh=-\frac{p\pi^2}{2\rmi\ell_d^2}\csc^2(\frac{\pi z}{\ell_d})$, from which follows that
\begin{equation}
    K = -\frac{p^2\pi^4}{\ell_d^4}\frac{\vert\csc(\frac{\pi z}{\ell_d})\vert^4}{\left(1+\frac{p^2\pi^2}{4\ell_d^2}\vert\cot(\frac{\pi z}{\ell_d})\vert^2\right)^4}= -\frac{p^2\pi^4}{\ell_d^4}\frac{\vert\sin(\frac{\pi z}{\ell_d})\vert^4}{\left(\vert\sin(\frac{\pi z}{\ell_d})\vert^2+\frac{p^2\pi^2}{4\ell_d^2}\vert\cos(\frac{\pi z}{\ell_d})\vert^2\right)^4}.
\end{equation}

Finally, upon application of the deformation in equation (\ref{eq::EnneperDeformation}) leading to the Enneper graph of a minimal TGB, the effective distance between neighboring axes diminishes under the increase of $p/\ell_d$, as depicted in Fig. \ref{fig:effDistAxesTGB} (Left). It then follows that in order to obtain a minimal TGB with prescribed pitch and axes distance, the value of $\ell_d$ in the expression of $h(z)$ has to be appropriately tuned. The pitch remains unchanged during the deformation process.

\begin{figure}
    \centering
    {\includegraphics[width=0.8\linewidth]{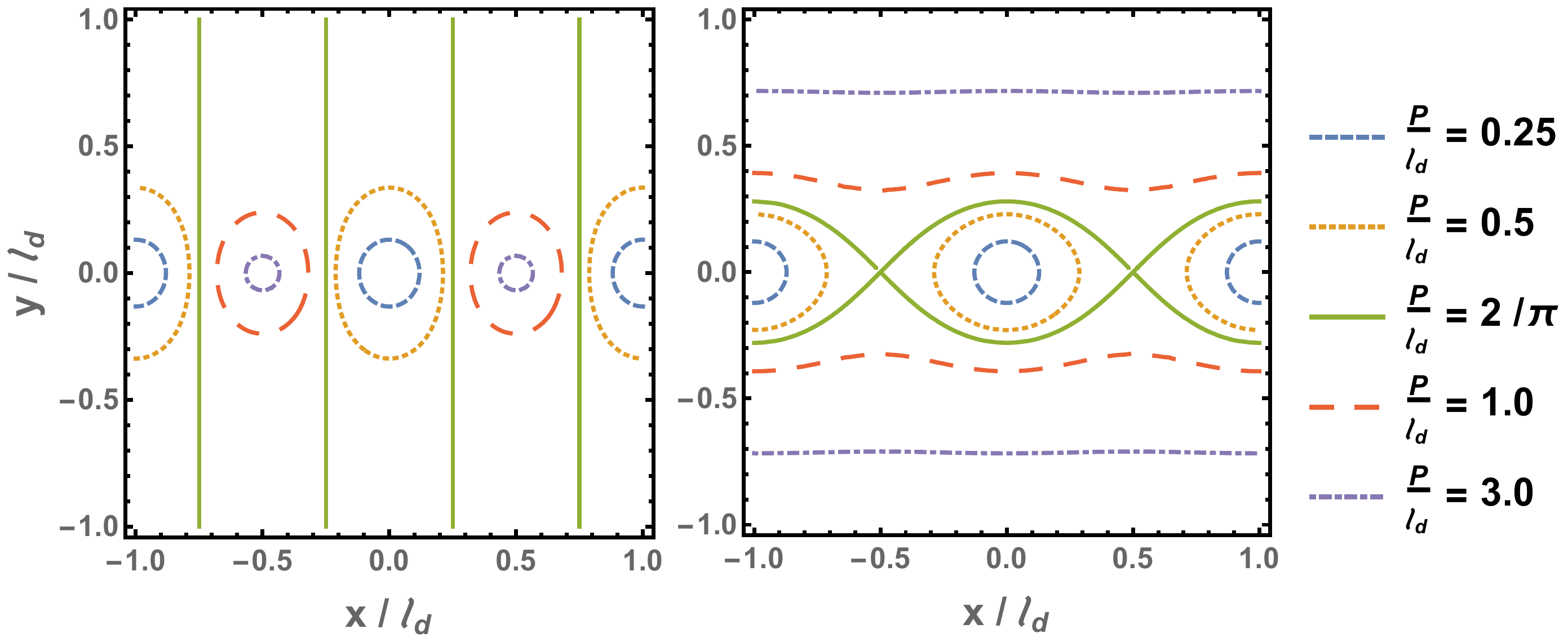}}
    \caption{Level curves of the Gauss map $z\mapsto\vert g(z)\vert$ given by the stereographic projection of the unit normal of minimal surfaces obtained by concatenating infinitely many helical motifs in a line. \textbf{(Left)} Level curves associated with a TGB, i.e., an infinite collection of identical motifs evenly spaced along a line. \textbf{(Right)} Level curves associated with UtGB, i.e., an infinite collection of helicoidal dipoles evenly spaced along a line. In both cases, the critical pitch is given by $p_c=\frac{2\ell_d}{\pi}$.}
    \label{fig:LevelSetsTGBGaussMap}
\end{figure}
\begin{figure}
    \centering
    {\includegraphics[width=0.44\linewidth]{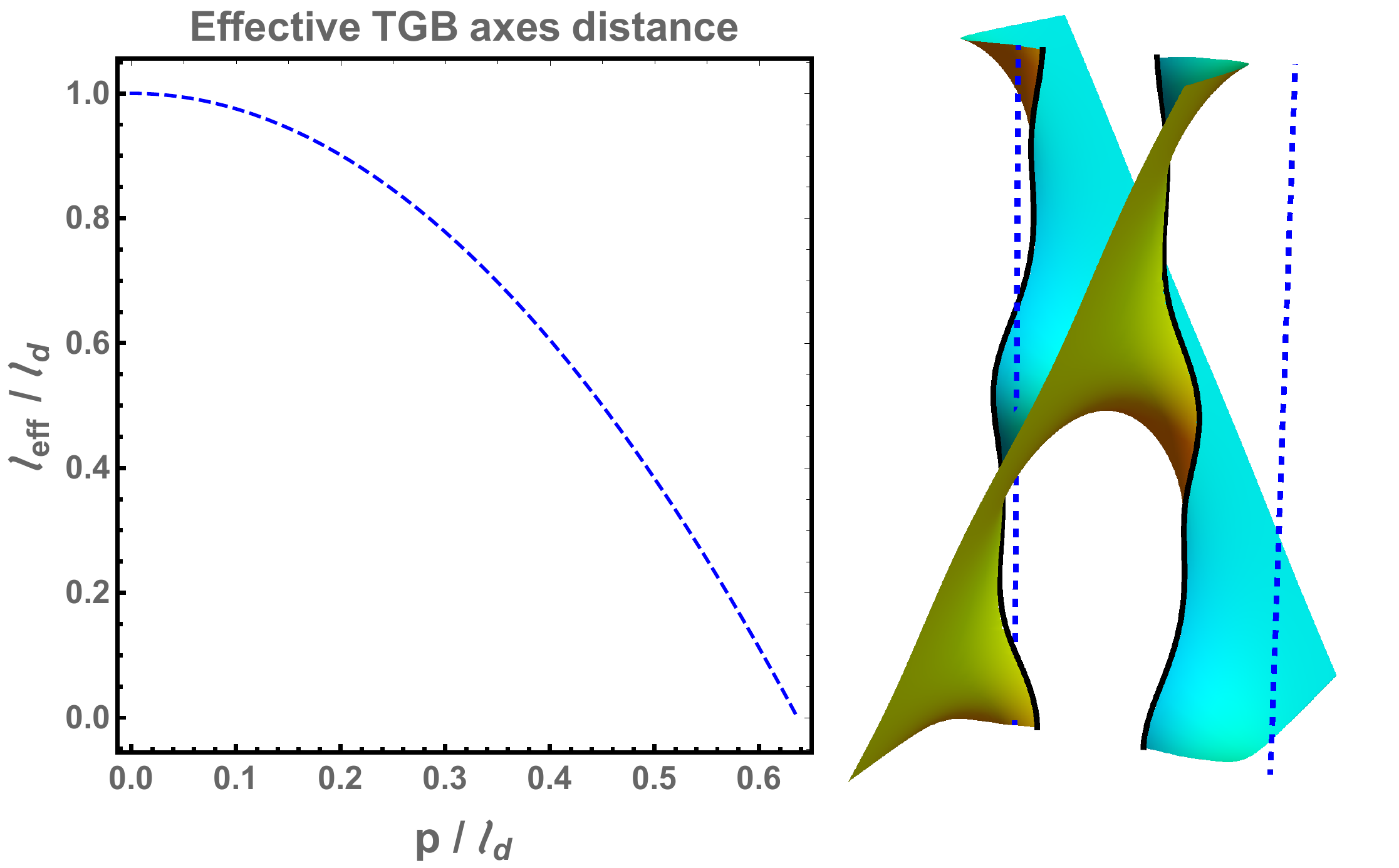}}
{\includegraphics[width=0.535\linewidth]{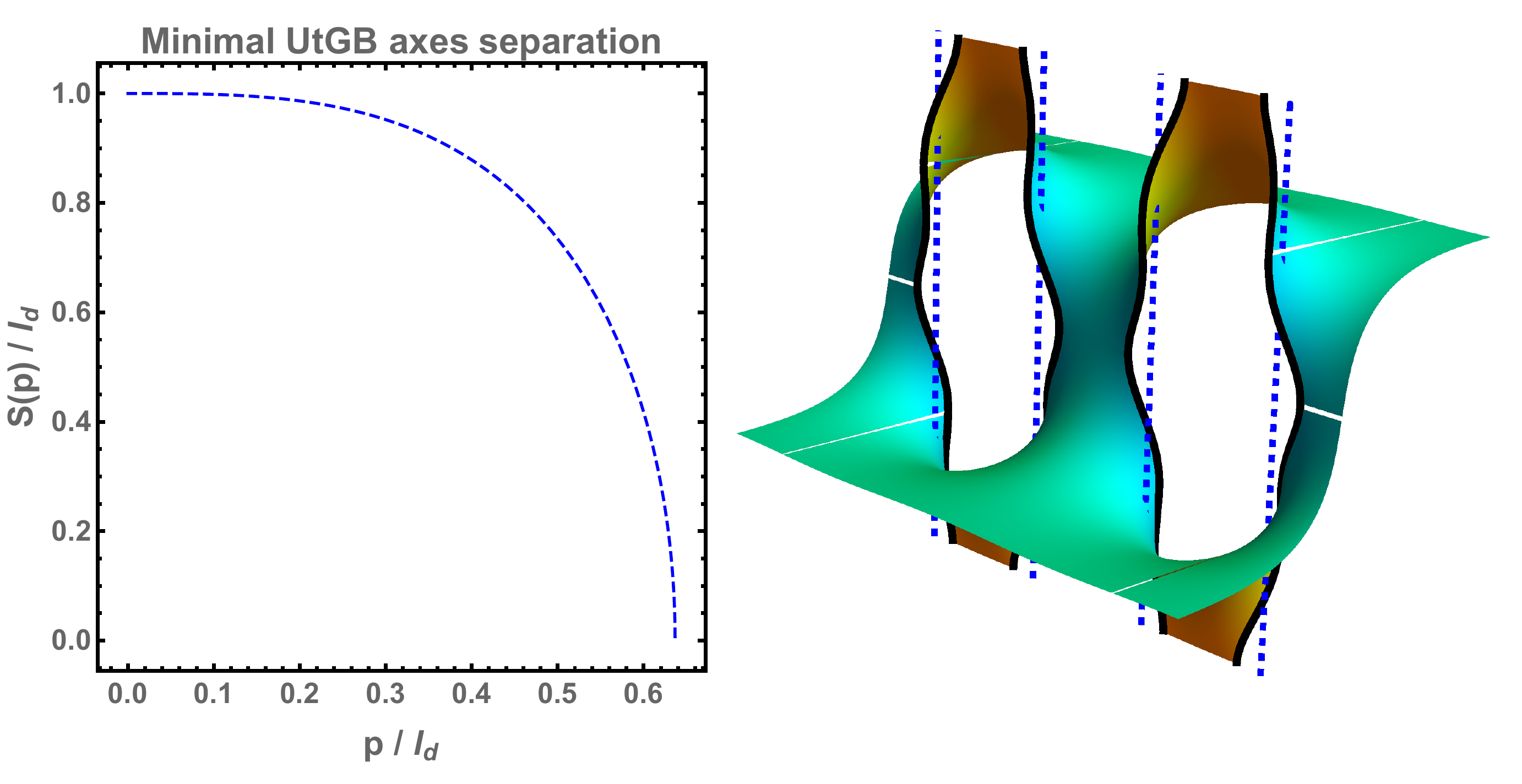}}
    \caption{(Left) Effective axes distance $\ell_{\mathrm{eff}}$ of minimal TGB's as a function of the pitch $p$ and initial axes separation $\ell_d$. For a fixed $\ell_d$, the two axes coincide at the critical value $p_c=\frac{2\ell_d}{\pi}$, i.e., the effective distance $\ell_{\mathrm{eff}}$ vanishes. In addition, the corresponding minimal surface has both asymptotic planes with the same inclination meaning that the unit cell collapses to a plane. The dashed blue lines indicate the position of the axes before the deformation. (Right) Minimal axes separation $S(p)$ of minimal UtGB's as a function of the pitch $p$ and initial axes separation $\ell_d$. On average, the distance between neighboring axes remains constant. However, the top and bottom portions of the two axes in a unit cell approach while for two neighboring cells the approach happens around the center of the corresponding neighboring axes. For a fixed $\ell_d$, neighboring axes touch at the critical value $p_c=\frac{2\ell_d}{\pi}$. The dashed blue lines indicate the position of the axes before the deformation.} 
    \label{fig:effDistAxesTGB}
\end{figure}

{Now, we consider the construction of a minimal UtGB as an Enneper immersion, Fig. \ref{fig:TGB} right.} The corresponding harmonic function is then given by
\begin{equation}
 h(z)=p\,\mbox{Im}\left(\ln\sin\frac{\pi z}{2\ell_d}\right)-p\,\mbox{Im}\left(\ln\cos\frac{\pi z}{2\ell_d}\right),
\end{equation}
where $p$ and $-p$ are the pitches of the helical motifs in each dipole pair that are separated by a distance $\ell_d$. From
\begin{equation}
    \frac{\partial h}{\partial z}=\frac{p\pi}{4\rmi\ell_d}\left[\cot\left(\frac{\pi z}{2\ell_d}\right)+\tan\left(\frac{\pi z}{2\ell_d}\right)\right]=\frac{p\pi}{2\rmi\ell_d}\csc\left(\frac{\pi z}{\ell_d}\right),
\end{equation}
we can compute the auxiliary function $P(z)=\int(\partial_zh)^2\rmd z$ as
\begin{equation}
 P(z)=\frac{p^2\pi}{4\ell_d}\cot\left(\frac{\pi z}{\ell_d}\right)=\sum_{n\in\mathbb{Z}}\frac{p^2}{4(z-n\ell_d)}.
\end{equation}
Contrarily to minimal TGB's, for a minimal UtGB we just have an infinite sum of the functions $P(z)$ associated with helicoids of pitch of magnitude $|p|$ located at $\{n\ell_d:n\in\mathbb{Z}\}$ without any further correction.

Upon application of the deformation in equation (\ref{eq::EnneperDeformation}) leading to the Enneper graph of a minimal UtGB, any pair of neighboring axes {deform} under the increase of $p/\ell_d$, as depicted in Fig. \ref{fig:effDistAxesTGB} (Right). On average, the distance between neighboring axes remains constant. However, the top and bottom portions of the two axes in the same unit cell approach while for two neighboring axes from distinct cells this happens around the center of axes. For a fixed $\ell_d$, neighboring axes finally touch at the critical value $p_c=\frac{2\ell_d}{\pi}$, which corresponds to the topological transition observed in the level curves of $\vert g(z)\vert$ as depicted in Fig. \ref{fig:LevelSetsTGBGaussMap}. The pitch remains unchanged during the deformation process.

From the second derivative $\partial^2_zh=-\frac{p\pi^2}{2\rmi\ell_d^2}\cot(\frac{\pi z}{\ell_d})\csc(\frac{\pi z}{\ell_d})$, the Gaussian curvature is
\begin{equation}
    K = -\frac{p^2\pi^4}{\ell_d^4}\frac{\vert\sin(\frac{\pi z}{\ell_d})\vert^4\vert\cos(\frac{\pi z}{\ell_d})\vert^2}{\left(\vert\sin(\frac{\pi z}{\ell_d})\vert^2+\frac{p^2\pi^2}{4\ell_d^2}\right)^4}.
\end{equation}
Finally, the stereographic projection of the unit normal of the layers of a minimal UtGB is 
\begin{equation}
    g=-\frac{1}{\partial_zh}=\frac{2\ell_d}{p\pi\rmi}\sin\left(\frac{\pi z}{\ell_d}\right).
\end{equation}
The level sets of $\vert g\vert$ help to find the proper domain to compute the Enneper graph of a minimal UtGB (see Fig. \ref{fig:LevelSetsTGBGaussMap}, right). In addition, the image of the surface normal over the fundamental domain $\Omega_0=\{(x,y)\in[0,2\ell_d]\times\mathbb{R}:\vert g(x+\rmi y)\vert\geq1\}$ covers the unit sphere exactly twice, which results in a total curvature of $\int_{\Omega_0}K\rmd A=-4\pi$. For example, one copy comes from $\Omega_0\cap[0,2\ell_d]\times[0,\infty)$ and another from $\Omega_0\cap[0,2\ell_d]\times(-\infty,0]$.

\section{Second variation of the area functional}

The above procedures allow us to construct explicitly exact minimal surfaces with any desired arrangement of helical motifs. The physical motivation for this  construction is that minimal surfaces arise naturally as critical points of both the area of a surface and of the Helfrich free energy. However, while minimal surfaces are stationary points of these functionals, they are not necessarily (local) minima. To establish that a given configuration is indeed of minimal energy the second variation should also be examined. For simplicity, we restrict ourselves to analyzing the area functional alone. Moreover, the configurations we will consider will be restricted to harmonic functions of the form of equation \eqref{eq::hOfManyMotifs}, i.e., arrangements of helical motifs that asymptote to helicoidal surfaces. This is not the most general case. Nonetheless, as will be next demonstrated, this restricted class singles out global pitch balance as a necessary condition for stability. While claims of stability are valid only within this restrictive set of asymptotically helicoidal surfaces, surfaces proved to be locally unstable will remain so even when considering all possible harmonic functions.

Considering normal variations of a minimal surface $\mathbf{r}:D\to \Sigma$ in the direction of $\mathbf{V}=v\mathbf{N}$, where $v\in H(D)=\{f\in C^{\infty}(D):f\vert_{\partial D}=0\}$, the second derivative of the area functional is \cite{BarbosaAJM1976}
\begin{equation}
    I(v)\equiv\frac{\rmd^2}{\rmd t^2}\mbox{Area}\vert_{\{\Sigma:H=0\}} = \int_{\Sigma}v(-\nabla_a^2v+2Kv)\rmd A.
\end{equation}
Therefore, a minimal surface is said to be \emph{stable}, i.e., it is a local minimum, if $I(v)>0$ for all $v\in H(D)$. (When $\bar{D}=D\cup\partial D$ is not compact, the surface is said to be stable if it is stable for all $\Omega\subset D$ such that $\bar{\Omega}$ is compact.) We note that our variations are assumed to vanish on the boundaries (which are also assumed stationary). Thus, while negative values in the second variation imply instability, a positive definite second variation does not necessarily imply stability if the helical motifs are allowed to change their relative orientation or move in space.

A stability criterion for fixed boundaries can be established based on the area over the unit sphere covered by the surface unit normal $\mathbf{N}$. If the area of this spherical image is smaller than $2\pi$, then the corresponding minimal surface has to be stable \cite{BarbosaAJM1976}. (It is worth mentioning that for this stability criterion to work $\mathbf{N}$ does not need to be a one-to-one map.) In our case, the investigation of $g$, the stereographic projection of the surface unit normal $\mathbf{N}$, will play an important role in the study of the stability of minimal helical motifs since the corresponding Gauss map is just the quotient of polynomials. Finally, a criterion of instability that will be useful to our purposes is the following: if the surface normal $\mathbf{N}$ is a one-to-one map from $\Sigma$ to $\mathbb{S}^2$ and $\mathbf{N}(\Sigma)$ is a hemisphere, in particular, the spherical area is $2\pi$, then the corresponding minimal surface is unstable \cite{Rado1933,BarbosaAJM1976}.  (When the area of $\mathbf{N}(\Sigma)\subset\mathbb{S}^2$ is precisely $2\pi$, but $\mathbf{N}$ is not one-to-one or does not cover a hemisphere, the minimal surface may or may not be stable \cite{KoisoJMSJ1984}.)

\subsection{Two equally handed helical motifs are unstable}

The stereographic projection of the Gauss map of a pair of equally handed helical motifs is given by 
$$
g(z)=-\frac{\rmi}{p/R}\left(\frac{z}{R}-\frac{R}{4z}\right)\propto\frac{z}{R}\,\mbox{ if }\,\left\vert\frac{z}{R}\right\vert\gg1.
$$
For any given $w\in\mathbb{C}$, the equation $g(z)=w$ will generically have two distinct solutions and it then follows that the unit normal $\mathbf{N}$ covers the north hemisphere of the unit sphere $\mathbb{S}^2$ exactly twice. (The domain $\Omega_N$ is chosen such that $\mathbf{N}$ takes values on the north hemisphere only.) For the sake of our study of stability, we do not need to count the spherical area with multiplicity and, therefore, the area of the spherical image of any piece $\mathbf{r}:D\subset\Omega_N\to\mathbb{R}^3$ of a minimal pair of equally handed motifs is bounded by $2\pi$. We are going to show that there exists a bounded domain $D_c\subset\Omega_N$ whose spherical image is precisely the north hemisphere and the corresponding normal is a 1-1 map, from which it follows that pairs of equally handed helical motifs are not local minima of the area functional with respect to normal variations that leave the boundary fixed. 

In general, any bounded region $D\subset\Omega_N$ is necessarily contained in a sufficiently large disc $D(0,\rho)\subset\mathbb{C}$, $\rho\gg1$. Under the stereographic projection $\Pi:\mathbb{S}^2\to\mathbb{C}$, a sufficiently small neighborhood of the north pole is mapped on the complement of  $D(0,\rho)$, in particular, it is mapped outside the domain $D$. Now, since $g$ behaves linearly for large values of $\vert z/R\vert$, any sufficiently small neighborhood of the north pole is mapped under $g=\Pi\circ\mathbf{N}$ outside $D\subset D(0,\rho)$ only once! Noticing that $g(z=0)=\infty$, i.e., $\mathbf{N}(0)$ is the north pole of $\mathbb{S}^2$, we conclude that there exists a neighborhood of the north hemisphere which is the image under $g$ of some region inside $D(0,\rho)$. Thus, it follows that there should exist a domain $D_c\subset\Omega_N$ such that its image under $g$ is precisely the north hemisphere, as we are going to show below in more detail.

The norm of $g$ can be computed as
\begin{equation}
     \vert g\vert = \frac{2R}{p}\left\vert \frac{r}{R}\rme^{\rmi\theta}-\frac{\rme^{-\rmi\theta}}{4r/R}\right\vert = \frac{2R}{p}\sqrt{\cos^2\theta+\sinh^2(\ln2\frac{r}{R})}.
\end{equation}
Using that the inverse of the stereographic projection is $\Pi^{-1}(w)=(\frac{2\re(w)}{1+\vert w\vert^2},\frac{2\im(w)}{1+\vert w\vert^2},\frac{-1+\vert w\vert^2}{1+\vert w\vert^2})$, we immediately see that the parallels of the unit sphere, i.e., lines of constant latitude, are associated with the level curves of $\vert g\vert$. We then conclude that the unit normal of a pair of equally handed helical motifs when restricted to $D_c=\{z:r\leq \frac{1}{2}\}\cap\Omega_N$ is a one-to-one map on the north hemisphere of the unit sphere. (The second copy comes from considering the outside of the disc $\{z:r\leq \frac{1}{2}\}$). In conclusion, the normal is a one-to-one map on the north hemisphere when restricted to $D_c$ and the minimal surface corresponding to a pair of equally handed motifs is unstable for any domain $D$ containing $D_c$. (If $D$ were stable, any subdomain $D'\subseteq D$ would have to be stable.)

The proof above is done for the symmetric case, $p_1=p_2$, but similar arguments can be devised to show that any pair of helical motifs such that $p_{1}\ne-p_{2}$ has to be unstable. (In fact, neutral total pitch is a necessary condition for stability, as will be shown in Theorem \ref{Theo::StabParkingGarageIIbkCoeff} below.)

\subsection{Minimal dipoles are stable}

Since the Gauss map of a minimal dipole is a quadratic function on the complex plane, 
$$g(z)=-\frac{2\rmi}{p/R}\left[\left(\frac{z}{R}\right)^2-\frac{1}{4}\right]\propto\left(\frac{z}{R}\right)^2\,\mbox{ if }\,\left\vert\frac{z}{R}\right\vert\gg1,$$ 
for any $w$, $g(z)=w$ generically have two solutions and, as in the previous case, the unit normal $\mathbf{N}$ covers the north hemisphere of $\mathbb{S}^2$ exactly twice. In addition, the area of the spherical image of any piece $\mathbf{r}:D\subset\Omega_N\to\mathbb{R}^3$ of a minimal dipole is bounded by $2\pi$. We are going to show that this area is in fact smaller than $2\pi$ for any bounded domain $D\subset\Omega_N$, from which we will conclude that minimal dipoles are local minima of the area functional for normal variations that leave the boundary fixed. Indeed, first note that any bounded region $D$ of $\Omega_N$ is necessarily contained in a sufficiently large disc $D(0,\rho)$. Since the north pole is mapped under stereographic projection $\Pi:\mathbb{S}^2\to\mathbb{C}$ on infinity, each point on the sphere sufficiently close to the north pole should be mapped on a complex number of sufficiently large modulus. Now, using that $g\propto z^2$ for large values of $\vert z/R\vert$, the two copies associated with a sufficiently small neighborhood of the north pole is necessarily mapped under the Gauss map $g$ outside $D\subseteq D(0,\rho)$. In conclusion, it follows that the spherical image of any bounded $D\subset\Omega_N$ is strictly contained in a hemisphere and, therefore, the corresponding area is smaller than $2\pi$. This can be alternatively confirmed by the fact that the parallels of the unit sphere are associated with the level curves of the Cassini ovals. Indeed, the $z$-coordinate of $\mathbf{N}=\Pi^{-1}\circ g$ is given by $N_3=\frac{-1+\vert g\vert}{1+\vert g\vert}$, which is constant if and only if $\vert g\vert$ is constant and, in addition, $N_3\to1\Leftrightarrow\vert g\vert\to\infty$ (See Fig. \ref{fig:LevelSets2-HelGaussMap}, Left).
  
\subsection{Stability criterion for finitely many helical motifs}
\label{subsect::StabilityOfGarages}

We have seen on the previous subsections that minimal pairs of unequal handed helical motifs, $p_1+p_2\not=0$, are not local minima of the area functional, while minimal dipoles are. Therefore, we arrive at the important conclusion that for two given helical motifs, neutral total pitch is a necessary and sufficient condition for the corresponding minimal surface be a local minimum of the area functional. For more than two helical motifs, extra conditions must be imposed to guarantee stability.

Following Barbosa--Do Carmo \cite{BarbosaAJM1976}, the stability of a minimal surface may be investigated by computing the area under the Gauss map. For $n$ helical motifs glued together, we will proceed as before and study how the complex plane is covered by the stereographic projection of the unit normal, which is a quotient of polynomials of degree $n$ and $n-1$. 

We now formulate a stability criterion for minimal parking garages depending on the position and pitches of the helical motifs. A proof for the first theorem below is based on the observation that $n$ helical motifs are stable if only if $g$, which in general is a rational function, is a polynomial of degree $n$. The second theorem is a reformulation of the first and will allow us to check the stability of minimal helical motifs with respect to normal variations that leave the boundary fixed more easily.

\begin{theorem}[Stability of minimal helical motifs. I]\label{Theo::StabParkingGarageIbkCoeff}
Consider a set of $n$ helical motifs of pitches $p_1,\dots,p_n\in\mathbb{R}$ located at $z_1,\dots,z_n\in\mathbb{C}$. The corresponding asymptotically flat Enneper graph is a stable minimal surface with respect to normal variations that leave the boundary fixed if and only if
\begin{equation}
\forall\,k\in\{0,\dots,n-2\},\,\left\{
\begin{array}{ccc}
d_{k+1} & \equiv & \displaystyle\sum_{j=1}^n p_js_k(z_1,\dots,z_{j-1},z_{j+1},\dots,z_n) = 0\\
d_n & \equiv & \displaystyle\sum_{j=1}^n p_js_{n-1}(z_1,\dots,z_{j-1},z_{j+1},\dots,z_n)\not=0\\
\end{array}
\right.,
\end{equation}
where $s_k(\cdot,\dots,\cdot)$ is the elementary symmetric polynomial, i.e., for any $(x_1,\dots,x_m)$, $s_0(x_1,\dots,x_m)=1$ and $s_k(x_1,\dots,x_m)=\sum_{1\leq j_1<\dots<j_k\leq m}\prod_{j=1}^kx_j$. 
\end{theorem}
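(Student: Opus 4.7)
The plan is to reduce the stability problem to an algebraic property of the Gauss map and then apply the spherical area criteria of Barbosa--Do Carmo \cite{BarbosaAJM1976}. From the preceding subsection the stereographic projection of the unit normal has the explicit form
\[
g(z) = -2\rmi\,\frac{z^n - a_1 z^{n-1} + \cdots + (-1)^n a_n}{d_1 z^{n-1} - d_2 z^{n-2} + \cdots + (-1)^{n-1} d_n} \equiv -2\rmi\,\frac{P(z)}{Q(z)},
\]
with $\deg P = n$ and $\deg Q\le n-1$, and with no cancellation between $P$ and $Q$ (since $P$ vanishes precisely at the distinct $z_k$, where $Q(z_k)=p_k\prod_{j\ne k}(z_k-z_j)\ne 0$). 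The first observation is that the conditions of the theorem, namely $d_1=\cdots=d_{n-1}=0$ and $d_n\ne 0$, are equivalent to $Q$ being a nonzero constant, i.e.\ to $g$ being a polynomial of degree $n$. The remaining task is to prove that this polynomiality of $g$ is both necessary and sufficient for stability.

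Sufficiency proceeds along the same lines as the dipole case treated in the previous subsection. Assume $g$ is a polynomial of degree $n$. Since $g$ has no finite pole, every bounded $D\subset\Omega_N$ is mapped to a bounded subset of $\{w\in\mathbb{C}:|w|\ge 1\}$; under inverse stereographic projection this subset lies in the closed northern hemisphere and avoids a neighborhood of the north pole. Its spherical area is therefore strictly less than $2\pi$, and the Barbosa--Do Carmo criterion yields stability of $\mathbf{r}|_D$. Since this holds for every bounded $D\subset\Omega_N$, the entire asymptotically helicoidal surface is stable in the sense adopted here.

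For the necessity, suppose the stated conditions fail. Then $Q$ has a root $z_*\in\mathbb{C}$, which is a pole of $g$ of some order $m\ge 1$; near $z_*$ we have $g(z)\sim c(z-z_*)^{-m}$. A thin sector based at $z_*$ of angular opening $2\pi/m$ is mapped bijectively by $g$ onto an annular neighborhood of $\infty$; adjusting the radial extent of the sector, one obtains a bounded $D_c\subset\Omega_N$ on which $g$, and hence $\mathbf{N}$, is injective and whose spherical image is precisely the northern hemisphere. The instability criterion of Rado \cite{Rado1933} and Barbosa--Do Carmo \cite{BarbosaAJM1976} then makes the patch $\mathbf{r}(D_c)$ unstable, and since instability propagates to any bounded domain containing $D_c$ (extending the destabilizing normal variation by zero), the whole surface is unstable. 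The main technical point is the higher-order case $m\ge 2$: one must choose the sector carefully---most conveniently via a local analytic $m$-th root of $c/(z-z_*)^m$---so that $g$ remains injective on $D_c$ and the hemisphere is covered exactly once rather than $m$ times.
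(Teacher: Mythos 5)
Your proposal is correct and takes essentially the same route as the paper's proof: both reduce the question to whether $g$ is a polynomial of degree $n$ (equivalently, whether the denominator reduces to the nonzero constant $d_n$), establish sufficiency by observing that a polynomial $g$ sends any bounded subdomain of $\Omega_N$ to a bounded set avoiding a neighborhood of the north pole, so the spherical image has area $<2\pi$ and Barbosa--do Carmo applies, and establish necessity by locating a finite pole of $g$ from which a bounded patch $D_c$ with injective normal covering exactly the north hemisphere is extracted. Your explicit sector construction at the pole (and your check that numerator and denominator share no roots) merely makes concrete the step the paper asserts without detail, so the two arguments coincide in substance.
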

\begin{proof}
The stereographic projection of the unit normal $\mathbf{N}$ of the Enneper graph $\Sigma$ of $n$ helical motifs is
\[
 g = -\left(\displaystyle\frac{1}{2\rmi}\sum_{k=1}^n\frac{p_k}{z-z_k}\right)^{-1}= -2\rmi\,\frac{z^n-a_1z^{n-1}+a_2z^{n-2}\dots+(-1)^na_n}{d_1z^{n-1}-d_2z^{n-2}+\cdots+(-1)^{n-1}d_{n}}, 
\]
 where the coefficients $a_k$ and $d_k$ are computed from $\{p_i\}_{k=1}^n$ and $\{z_i\}_{k=1}^n$ in terms of the symmetric polynomials as $ a_k=s_k(z_1,\dots,z_n)$ and $d_k=\sum_{j=1}^n p_js_{k-1}(z_1,\dots,z_{j-1},z_{j+1},\dots,z_n)$.

Given $w\in\mathbb{C}$, the equation $g(z)=w$ can be rewritten as a polynomial equation of degree $n$ whose coefficients depend on $w$, $\{p_j\}$, and $\{z_j\}$:
$$g(z)=w\Leftrightarrow z^n-c_1z^{n-1}+\dots+(-1)^nc_n=0,\,c_k=c_k(w,\{p_j\},\{z_j\})=a_k+\rmi w\frac{d_k}{2}.$$
Thus, $g(z)=w$ generically has $n$ solutions and, consequently, the Gauss map $g$ should cover the north hemisphere of $\mathbb{S}^2$ exactly $n$ times. (The domain of the Enneper graph of helical motifs was defined as $\Omega_N=\{z\in\mathbb{C}: N_3>0,\,\mathbf{N}(z)=(N_1(z),N_2(z),N_3(z))\}$.)
 
On the one hand, if the denominator of $g$ is constant, i.e., $d_1=\dots=d_{n-1}=0$, then $g$ behaves as $\pm\frac{2\rmi}{d_n}z^n$ for large values of $\vert z/R\vert$ and the $n$ copies associated with a sufficiently small neighborhood of the north pole is necessarily mapped under the Gauss map $g$ outside a disc $D(0,\rho)$, for some $\rho\gg1$. It follows that any bounded subdomain of $\Omega_N$ has an area  on the unit sphere under $\mathbf{N}$ smaller than $2\pi$ and, consequently, $\Sigma$ is stable.

On the other hand, if the denominator of $g$ is not constant, i.e., there exists some $k_0<n$ such that $d_{k_0}\not=0$, then $g$ behaves as $-\frac{2\rmi}{d_{k_0}}z^{n-k_0}$ for large values of $\vert z/R\vert$, say for all $z\in D(0,\rho)$, $\rho\gg1$. This means that a sufficiently small neighborhood of the north pole is covered under $\mathbf{N}$ only $n-k_0$ times. Therefore, the remaining $k_0$ copies of a neighborhood of the north pole should come from the inside of the disc $D(0,\rho)$. It then follows that there must exist a bounded subdomain $D_c$ of $\Omega_N$ such that $\mathbf{N}$ is 1-1 over $D_c$ and $\mathbf{N}(D_c)\subset\mathbb{S}^2$ is precisely the north hemisphere. Consequently, $\Sigma$ is unstable.
\end{proof}

The coefficients $d_{k+1}$ can be rewritten in a more convenient form. For $k=0$, we obtain the total pitch 
\begin{equation}
d_1=\sum_{j=1}^n p_js_0(z_1,\dots,z_{j-1},z_{j+1},\dots,z_n)=\sum_{j=1}^np_j.
\end{equation}
For $k=1$, the coefficient $d_2$ can be rewritten as a function of the "center of mass" of $z_1,\dots,z_n$:
\begin{equation}
  d_2= \sum p_js_1(z_1,\dots,z_{j-1},z_j,z_{j+1},\dots,z_n)-\sum p_jz_j= d_1s_1(\mathbf{z})-\sum p_jz_j,
\end{equation}
where we adopted the shorthand notation $s_k(\mathbf{z})\equiv s_k(z_1,\dots,z_n)$. In general, we can write any $d_k$ as function of $b_k\equiv\sum_{j=1}^np_jz_j^k$. (Notice that $b_0=d_1=\sum_jp_j$.) Indeed, we can rewrite $d_{k+1}$ as
\begin{eqnarray}
  d_{k+1}  & = & \sum_j p_js_k(\dots,z_j,\dots)-\sum_j p_j\sum_{j_1<\dots<j_{k-1};j_m\not=j}z_jz_{j_1}\cdots z_{j_{k-1}}\nonumber\\
  & = & d_1s_k(\mathbf{z})-\sum_j p_jz_j\sum_{j_1<\dots<j_{k-1};j_m\not=j}z_{j_1}\cdots z_{j_{k-1}}.
\end{eqnarray}
The second term in the last equality behaves as the coefficient $d_k$ associated with the (complex) pitches $p_1z_1,\dots,p_nz_n$. Thus, proceeding inductively,
\begin{eqnarray}
  d_{k+1} & = & d_1s_k(\mathbf{z})-\sum_j p_jz_j\sum_{j_1<\dots<j_{k-1};j_m\not=j}z_{j_1}\cdots z_{j_{k-1}}\nonumber\\
  & = & d_1s_k(\mathbf{z})-\left[(\sum_jp_jz_j)s_{k-1}(\mathbf{z})-\sum_j p_jz_j^2\sum_{j_1<\dots<j_{k-2};j_m\not=j}z_{j_1}\cdots z_{j_{k-2}}\right]\nonumber\\
  & = & \dots\,\,\,\, = \sum_{j=0}^k(-1)^jb_js_{k-j}(\mathbf{z}).
\end{eqnarray}

In conclusion, theorem \ref{Theo::StabParkingGarageIbkCoeff} can be reformulated as follows.

\begin{theorem}[Stability of minimal helical motifs. II]\label{Theo::StabParkingGarageIIbkCoeff}
Consider a set of $n$ helical motifs of pitches $p_1,\dots,p_n\in\mathbb{R}$ located at $z_1,\dots,z_n\in\mathbb{C}$. The corresponding Enneper graph is a stable minimal immersion with respect to normal variations that leave the boundary fixed if and only if
\begin{equation}
b_0\equiv\sum_{j=1}^np_j=0\mbox{ and }b_{k} \equiv \displaystyle\sum_{j=1}^n p_jz_j^k = 0,\mbox{but }
b_{n-1} \equiv  \displaystyle\sum_{j=1}^n p_jz_j^{n-1}\not=0,
\end{equation}
where $k\in\{1,\dots,n-2\}$.
\end{theorem}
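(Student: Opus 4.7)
The plan is to derive Theorem \ref{Theo::StabParkingGarageIIbkCoeff} as a purely algebraic corollary of Theorem \ref{Theo::StabParkingGarageIbkCoeff}, without revisiting the Gauss-map stability argument. The crucial ingredient would be the closed-form identity
\[
d_{k+1} = \sum_{j=0}^{k} (-1)^{j} b_{j}\, s_{k-j}(\mathbf{z}),
\]
which is derived just before the theorem statement. Since $s_0(\mathbf{z})=1$, this identity expresses each $d_{k+1}$ as an upper-triangular linear combination of $b_0,b_1,\dots,b_k$ in which $b_k$ appears with coefficient $(-1)^k\neq 0$. Consequently, the $\mathbb{C}$-linear map $(b_0,\dots,b_{n-1})\mapsto(d_1,\dots,d_n)$ is upper triangular with non-vanishing diagonal, and in particular invertible.

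Given this observation, the bulk of the proof would be a short induction on $k$. First I would note the base case $d_1=b_0$, so $d_1=0$ iff $b_0=0$. Next, assuming inductively that $b_0=\cdots=b_{k-1}=0$, the identity collapses to $d_{k+1}=(-1)^{k}\,b_{k}$, and hence $d_{k+1}=0$ iff $b_k=0$. Iterating through $k=n-2$ would show that $d_1=\cdots=d_{n-1}=0$ is equivalent to $b_0=\cdots=b_{n-2}=0$. Under these vanishing conditions the same identity gives $d_n=(-1)^{n-1}b_{n-1}$, so the non-degeneracy condition $d_n\neq 0$ is equivalent to $b_{n-1}\neq 0$. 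Combining these equivalences with the criterion of Theorem \ref{Theo::StabParkingGarageIbkCoeff} would conclude the argument.

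I do not anticipate any serious obstacle here, as the statement is essentially a relabeling of the criterion of Theorem \ref{Theo::StabParkingGarageIbkCoeff} in terms of the physically natural power-sum moments $b_k=\sum_j p_j z_j^k$, which are considerably easier to evaluate than the mixed expressions $d_{k+1}=\sum_j p_j s_k(z_1,\dots,\hat z_j,\dots,z_n)$. The only step deserving care is the bookkeeping of signs and indices in the elementary symmetric polynomials when verifying that the transformation between the $d_k$ and the $b_k$ is strictly triangular; once that is confirmed, the induction closes immediately and the theorem follows.
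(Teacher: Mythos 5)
Your proposal is correct and follows essentially the same route as the paper: both rest on the identity $d_{k+1}=\sum_{j=0}^{k}(-1)^{j}b_{j}s_{k-j}(\mathbf{z})$ established just before the theorem, the paper packaging the triangularity as an invertible triangular linear system relating $(b_0,\dots,b_{n-1})$ to $(d_1,\dots,d_n)$ while you unwind it as an induction on $k$. If anything, your explicit induction makes the final step (that the specific vanishing pattern $d_1=\dots=d_{n-1}=0$, $d_n\neq 0$ corresponds exactly to $b_0=\dots=b_{n-2}=0$, $b_{n-1}\neq 0$) slightly more transparent, since that step uses the triangular structure and not merely invertibility.
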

\begin{proof}
The coefficients $d_{k+1}$ and $b_k$ are related by $d_{k+1}=\sum_{j=0}^k(-1)^jb_js_{k-j}(\mathbf{z})$. In addition, this correspondence can be rewritten as a system of linear equations:
\begin{equation}
    \left(
    \begin{array}{ccccc}
        1    &  0     & 0     & \cdots & 0\\
        s_1    & -1   & 0     & \cdots & 0\\
        s_2    & -s_1   & 1   & \cdots & 0\\
        \vdots &\vdots  &\vdots & \cdots & 0\\
        s_{n-1}&-s_{n-2}&s_{n-3}& \cdots & (-1)^{n-1}\\
    \end{array}
    \right)
    \left(
    \begin{array}{c}
         b_0  \\
         b_1 \\
         b_2\\
         \vdots\\
         b_{n-1}\\ 
    \end{array}
    \right)    = \left(
    \begin{array}{c}
         d_1  \\
         d_2 \\
         d_3\\
         \vdots\\
         d_{n}\\ 
    \end{array}
    \right).
\end{equation}
Since the determinant of the coefficient matrix of the system above does not vanish, $(-1)^n\not=0$, it follows that $\{b_k\}$ can be uniquely computed for a given $\{d_k\}$ and vice-versa. In particular, $(d_1,\dots,d_{n})=(0,\dots,0,d_n\not=0)$ if and only if $(b_0,\dots,b_{n-1})=(0,\dots,0,b_{n-1}\not=0)$.
\end{proof}

{It should be remarked that variations that vanish on the boundaries constitute a subset of those variations that are allowed to move and/or distort the boundaries. Therefore, as generic boundary variations will broaden the class of perturbations, they may render surfaces that were stable under the Barbosa-Do Carmo theorem, unstable. On the other hand, unstable surfaces must remain unstable. In short, it follows as a corollary that the stability conditions stated in Theorem \ref{Theo::StabParkingGarageIbkCoeff} or Theorem \ref{Theo::StabParkingGarageIIbkCoeff} will become \emph{necessary conditions} for stability with respect to variations that do not have to vanish on the boundary.}

\subsubsection{Example of stable pitch balanced helical motifs}

We demonstrated that two helical motifs forming a minimal dipole is an example of a stable Enneper graph. 
We now provide another example consisting of a helical motif of pitch $-np$ balanced by $n$ helical motifs of pitch $p$ each. It is worth mentioning that this type of configuration resembles the basic units composing the helical geometry in plant thylakoids \cite{BussiPNAS2019}, where the central right-handed helical motif is balanced by several smaller left-handed motifs. 

Let $p_0=-np$, $p_1=p,\dots,p_{n-1}=p$, and $p_n=p$ be the pitches of $n+1$ helical motifs located at the center and vertices of a regular $n$-gon, respectively. The first equilibrium equation is naturally satisfied $b_0=\sum_{j=0}^np_j=0$. For the remaining equations, we may use the following identity for the $n$-th roots of unity $\zeta_0=1,\zeta_1,\dots,\zeta_{n-1}$:
    $\sum_{j=0}^{n-1}\zeta_j^k=0$ if $k$ is not a multiple of $n$, but  $\sum_{j=0}^{n-1}\zeta_j^k=n$ if otherwise \cite{Ahlfors1979}. Now, writing $z_0=0$ and $z_j=R\,\zeta_{j-1}$, $j=1,\dots,n$, it follows that 
\begin{equation}
 b_k = \sum_{i=0}^np_iz_i^k=pR^k\sum_{i=0}^{n-1}\zeta_i^k=\left\{
    \begin{array}{ccc}
        0 & \mbox{ if } & k<n  \\
        npR^n & \mbox{ if } & k=n\\ 
    \end{array}
    \right..
\end{equation}   
From Theorem \ref{Theo::StabParkingGarageIIbkCoeff}, we conclude that the configuration of helical motifs $\{(p_0=-np,z_0=0)\}\cup\{(p_j=p,z_j=R\zeta_{j-1})\}_{j=1}^{n}$, where $p\not=0$ and $R>0$, is a stable minimal Enneper immersion.

\section{Discussion}
Examining the non-linear geometric interactions between helical motifs in lamellar structures is necessary for understanding the inter-motif interaction at distances comparable to their pitch. Recent experiments revealed such closely packed helical motifs in the Endoplasmic reticulum \cite{TerasakiCell2013} and plant thylakoids \cite{BussiPNAS2019}.
By providing a constructive method for producing exact minimal surfaces from harmonic functions we are able to examine the geometry and interactions between such closely spaced helical motifs, which are not amenable to a small-slope approximation. 

Focusing on asymptotically helicoidal surfaces we proved that for the obtained minimal surfaces to be locally stable minima of the area functional the embedded helical motifs must be pitch balanced. Note, however, that when considering compact domains with boundaries (as is the case for the pitch balanced helical motifs in the endoplasmic reticulum and plant thylakoids) the surfaces may not be asymptotically helicoidal, and thus non-pitch balanced helical motifs may form a stable configuration. Moreover, the stability for pitch balanced arrangements of motifs is obtained by considering perturbations that vanish on the boundaries, and in particular fix the helical motifs in place. In many physical and biological cases the helical motifs are mobile and can vary their relative orientation and position to further lower their energy. {Additionally, this also opens the problem of considering an energy contribution from the boundary, as for example, the Euler-Plateau problem where one considers the boundary as an Euler elastica \cite{GiomiPRSA2012} or the Kirchhoff-Plateau problem where one considers the boundary as a Kirchhoff rod \cite{GiusteriJNS2017}.} The systematic study of these relative forces and torques between motifs {as well as the energy contribution of the boundaries are}  left to future work. Extending the results of stability also requires that we resolve the core of each motif. The Enneper immersion yields in many cases not only the desired minimal surface and helical motif, but also a non-physical section of minimal surface to accommodate ``excess material''. In the present work, we chose the Gauss Map to define the boundaries between the physical and non-physical portions of the Enneper immersion. Other choices may also be possible and are expected to vary the inter-motif interactions.

{Under the deformation leading to a minimal Enneper immersion, the distance between neighboring layers measured with respect to the vertical ($z$-axis) remains unchanged. The same, however, can not be said of the distance between layers as measured in the normal direction for the case of finitely many motifs.  In this work, we focused on producing exact minimal surfaces with desired topology, and thus we were not directly controlling the inter-layer separation, nor were we considering the elastic penalization associated with deviations from such constant spacing. While for some applications, such as smectic liquid crystals such terms must be take into account, it is not clear whether this is the case for biological membranes such as the thylakoid. }

\begin{figure}[t]
    \centering
    \includegraphics[width=\linewidth]{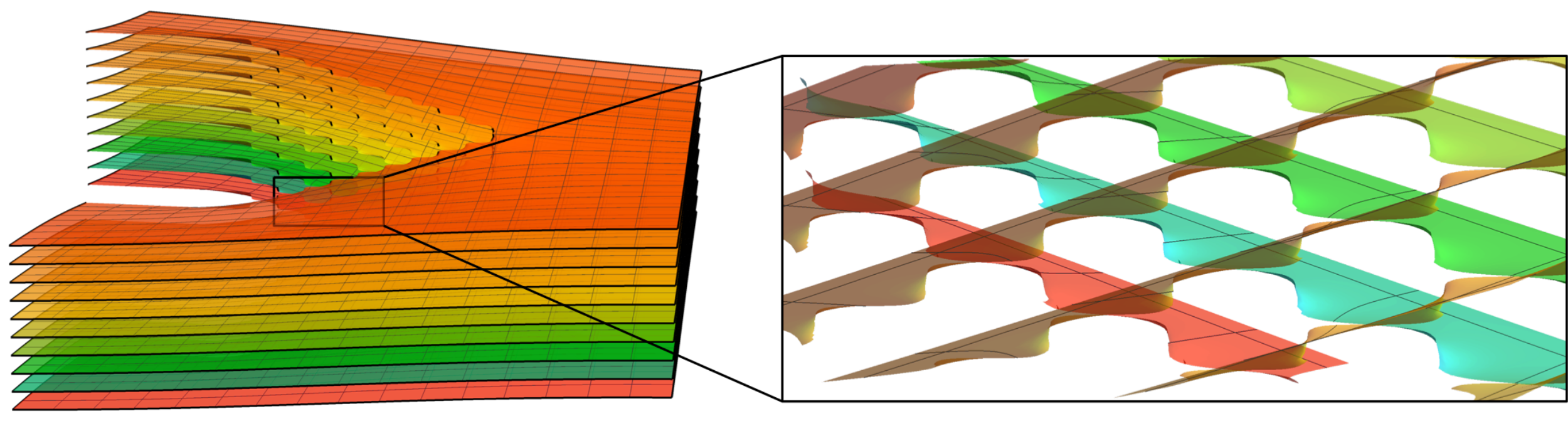}
    \caption{Multiple layers of minimal surfaces in which 10 identical helical motifs of pitch $p$ are embedded. Visualized far from the helical cores, the structure looks like interlaced helicoids of pitch $P=10 p$. Near the cores, however, the structure resembles that of a twist grain boundary.}
    \label{fig:my_label}
\end{figure}

The constructive recipe provided here allows us to examine arbitrary arrangements of helical motifs at any separation and at any scale. For example, resolving the near field for a finite chain of closely packed identical motifs allows us to show how the expected TGB-like structure of the near field reconciles with the predicted far field helicoidal structure, as observed in Figure \ref{fig:my_label}. 

We proved that every harmonic function on the complex plane can be distorted to an exact minimal surface using lateral displacements alone, and that every minimal surface may be obtained through this recipe;
For any given minimal surface we can find a harmonic function whose Enneper immersion yields the desired minimal surface. This invertibility renders this method a valuable tool with which to study quantitatively the non-linear interaction between motifs in natural biological and physical settings. The restriction to asymptotically flat surfaces arises from considering only a subset of harmonic functions, and is not an inherent limitation of the method. Extending the analysis presented here to account for all harmonic functions will yield an explicitly controllable and exhaustive representation of minimal surfaces.

\vskip6pt

\appendix

\section*{Appendix}

\section{Holomorphic representation of minimal surfaces}
\label{AppHolomRepMinSurf}

As it is well known, minimal surfaces in Euclidean space admit a representation in terms of holomorphic functions \cite{Barbosa1986,Weierstrass}. In this Appendix, we provide a self-contained discussion of this topic and the relation between the Enneper and Weierstrass representations.

Let $\mathbf{r}(z)=(f_1(z),f_2(z),f_3(z))$ be a smooth parametrization of a regular minimal surface, where $f_j$ is a smooth real function. Define the following complex functions $\phi_j=\frac{\partial f_j}{\partial x}-\rmi \frac{\partial f_j}{\partial y}$, $j=1,2,3$. From the relation $\phi_1^2+\phi_2^2+\phi_3^2=\partial_x\mathbf{r}\cdot\partial_x\mathbf{r}-\partial_y\mathbf{r}\cdot\partial_y\mathbf{r}-\rmi\partial_x\mathbf{r}\cdot\partial_y\mathbf{r}=a_{11}-a_{22}-\rmi a_{12}$, it follows that $\mathbf{r}$ is conformal if, and only if, $\sum_j\phi_j^2=0$. In addition, $\sum_j\vert\phi_j\vert^2=\partial_x\mathbf{r}\cdot\partial_x\mathbf{r}+\partial_y\mathbf{r}\cdot\partial_y\mathbf{r}=2a_{11}$ and, therefore, $\mathbf{r}$ is regular if and only if $\sum_j\vert\phi_j\vert^2\not=0$. Now, since $\mathbf{r}$ is conformal and minimal, the second derivatives must satisfy $\partial_x^2\mathbf{r}=-\partial_y^2\mathbf{r}$ and, from the commutativity of mixed derivatives, we also have $\partial_{xy}^2f_j=\partial_{yx}^2f_j$. From the definition of $f_j$, these conditions on the second derivatives give $\partial_x\re(\phi_j)=\partial_y\im(\phi_j)$ and $\partial_y\re(\phi_j)=-\partial_x\im(\phi_j)$. In other words, $\phi_1$, $\phi_2$, and $\phi_3$  are holomorphic and, consequently, we can write $\mathbf{r}=\re(\int\phi_1\rmd z,\int\phi_2\rmd z,\int\phi_3\rmd z)$.

Conversely, let a regular surface $\Sigma$ be parametrized by $\mathbf{r}=\re(\int^z\phi_1\rmd\zeta,\int^z\phi_2\rmd\zeta,\int^z\phi_3\rmd\zeta)=\int^z\frac{\phi+\bar{\phi}}{2}\rmd\zeta$, where $\phi_j$ ($j=1,2,3$) is holomorphic and $\phi=(\phi_1,\phi_2,\phi_3)$. Employing the Wirtinger derivatives \cite{Ahlfors1979}: $\pd_{x}=\pd_{z}+\pd_{\zb}$ and $\pd_{y}=\rmi(\pd_{z}-\pd_{\zb})$, it is immediate to obtain that the first and second derivatives of $\mathbf{r}$ are
\begin{equation}\label{Eq::VelandAccelMinimalImmersionAsIsotropicCurve}
\partial_x\mathbf{r}=\frac{\phi+\bar{\phi}}{2},\partial_y\mathbf{r}=-\frac{\phi-\bar{\phi}}{2\rmi}
\mbox{ and }
\partial_{x}^2\mathbf{r}=\frac{\phi'+\bar{\phi}'}{2}=-\partial_y^2\mathbf{r},\partial_{xy}^2\mathbf{r}=-\frac{\phi'-\bar{\phi}'}{2\rmi}.
\end{equation}
Then, the coefficients of the metric, $a_{11}=\partial_x\mathbf{r}\cdot\partial_x\mathbf{r}$, $a_{12}=\partial_x\mathbf{r}\cdot\partial_y\mathbf{r}$, and $a_{22}=\partial_y\mathbf{r}\cdot\partial_y\mathbf{r}$ are 
\[
a_{11}=\frac{1}{4}\sum_{j=1}^3(\phi_j^2+\bar{\phi}_j^2+2\vert\phi_j\vert^2),\,a_{12}=\frac{1}{4\rmi}\sum_{j=1}^3(\bar{\phi}_j^2-\phi_j^2),a_{22}=-\frac{1}{4}\sum_{j=1}^3(\phi_j^2+\bar{\phi}_j^2-2\vert\phi_j\vert^2).
\]
It follows that $\mathbf{r}$ is conformal, i.e., $a_{11}=a_{22}$ and $a_{12}=0$, if and only if $\phi_1^2+\phi_2^2+\phi_3^2=0$. In addition,  $\mathbf{r}$ is regular if and only if $\vert\phi_1\vert^2+\vert\phi_2\vert^2+\vert\phi_3\vert^2\not=0$. Now, noticing that $b_{11}=\partial_x^2\mathbf{r}\cdot\mathbf{N}=-\partial_y^2\mathbf{r}\cdot\mathbf{N}=-b_{22}$, the mean curvature of any conformal immersion $\mathbf{r}=\int\frac{\phi+\bar{\phi}}{2}\rmd z$ vanishes. 

For an Enneper immersion $\mathbf{r}=(L-\bar{P},h)=\re(\int(L'-P')\rmd z,\int-\rmi(L'+P')\rmd z,\int2h_z\rmd z)$, it follows that the metric of the surface given in \eqref{eq::EnneperDeformation} takes the form:
\[
a=\mymat{\pd_{x}\vr\cdot\pd_{x}\vr&\pd_{x}\vr\cdot\pd_{y}\vr
\\
\pd_{x}\vr\cdot\pd_{y}\vr&\pd_{y}\vr\cdot\pd_{y}\vr} = (1+h_{z} h_{\zb})^{2}\mymat{1&0\\0&1}.
\]
It is immediate to verify that the conditions $L'P'=(h_z)^2$ and $\vert L'\vert+\vert P'\vert\not=0$ guarantee that any Enneper immersion is a regular conformal  minimal immersion. Alternatively, to prove the minimality, we may use the fact that for a conformal metric $a_{ij}=F^2\delta_{ij}$, the  Laplacian operator corresponding to $a_{ij}$ is $\nabla^2_a=\frac{1}{F^2}\nabla^2=\frac{4}{F^2}\partial_{\bar{z}}\partial_z$. Now, using that $2H\,\mathbf{N}=\nabla^2_a\mathbf{r}$, the mean curvature vector of an Enneper immersion is
\[
\mathbf{H}\propto (\partial_z\partial_{\bar{z}} \,z-\partial_{\bar{z}}\partial_z\bar{P},\partial_{\bar{z}}\partial_zh)=(0,0,0),
\]
which implies $H=0$, i.e., an Enneper immersion is minimal.

Before we conclude by computing the Gaussian curvature, it is worthwhile to identify the mapping between the present formulation and the well-known Weierstrass representation \cite{Kuhnel2015} in which a minimal surface is expressed in terms of two holomorphic functions $f$ and $g$ through\footnote{Please, note that in Ref. \cite{AndradeJAM1998} it is used a slightly distinct Weierstrass representation for minimal surfaces.} 
\begin{equation}
    \mathbf{r} = \mbox{Re}\left(\int_{z_0}^z\frac{1}{2}f(\zeta)(1-g^2(\zeta))\rmd\zeta,\int_{z_0}^z\frac{\rmi}{2}f(\zeta)(1+g^2(\zeta))\rmd\zeta,\int_{z_0}^zf(\zeta)g(\zeta)\rmd\zeta\right).
\label{eq::weirstrass}    
\end{equation}
Comparing \eqref{eq::weirstrass} and $\mathbf{r}=(z-\bar{P},h)=\re(\int(1-P')\rmd z,\int-\rmi(1+P')\rmd z,\int2h_z\rmd z)$, we identify 
\[
1-P' = \frac{f}{2}(1-g^2),\,-\rmi (1+P') = \frac{\rmi f}{2}(1+g^2),\mbox{ and }2\,\partial_z h=fg.
\] 
from which follows that $g=-\frac{1}{\partial_zh}$  and $ f=-2(\partial_zh)^2$. While the relation between the Weierstrass and Enneper representations is difficult to directly interpret, it allows us to exploit known results from one representation to the other. In particular, the known expression for the Gaussian curvature $K$ in terms of the Weierstrass data \cite{Barbosa1986} yields the  curvature of an Enneper graph through
\begin{equation}
    K=-\left[\frac{4\vert g'\vert}{\vert f\vert(1+\vert g\vert^2)^2}\right]^2=-  4\frac{\vert \partial_{zz} h\vert^2}{(1+\vert \partial_zh\vert^{2})^{4}}=\frac{h_{xx}h_{yy}-h_{xy}^2}{[1+\frac{1}{4}(h_x^2+h_y^2)]^{4}},
\end{equation}
where we used that $\vert P'\vert=\frac{1}{4}\Vert\nabla h\Vert^2$ and $\vert\partial_{zz}h\vert^2=-\frac{1}{4}(h_{xx}h_{yy}-h_{xy}^2)$.

To finish this discussion of holomorphic representation, let us find the geometric interpretation of $g=-1/h_z$, which plays a fundamental role in this text. By seeing the normal $\mathbf{N}$ as a map from $\Sigma$ to the unit sphere $\mathbb{S}^2$, $g$ can be geometrically interpreted as the stereographic projection of $\mathbf{N}$. Indeed, using the tangent vectors of $\mathbf{r}=\re(\int^z\phi_1\rmd\zeta,\int^z\phi_2\rmd\zeta,\int^z\phi_3\rmd\zeta)$ given in equation \eqref{Eq::VelandAccelMinimalImmersionAsIsotropicCurve}, we have
\begin{equation}
\mathbf{N}=\frac{\partial_x\mathbf{r}\times\partial_y\mathbf{r}}{\Vert\partial_x\mathbf{r}\times\partial_y\mathbf{r}\Vert}=\frac{2}{\vert\phi_1\vert^2+\vert\phi_2\vert^2+\vert\phi_3\vert^2}(\im(\phi_2\bar{\phi}_3),-\im(\phi_1\bar{\phi}_3),\im(\phi_1\bar{\phi}_2)).
\end{equation}
Now, substituting $\phi_1=\frac{f}{2}(1-g^2)$, $\phi_2=\frac{\rmi f}{2}(1+g^2)$, and $\phi_3=fg$, and computing the stereographic projection of the unit normal $\mathbf{N}=(N_1,N_2,N_3)$, $\Pi\circ\mathbf{N}=(\frac{N_1}{1-N_3},\frac{N_2}{1-N_3})$, gives the desired identity $g=\Pi\circ\mathbf{N}$.

\enlargethispage{20pt}

\aucontribute{LCBdS and EE conceived the research, carried out the research and drafted the manuscript. LCBdS prepared the figures. Both authors gave final approval for submission.}

\competing{We declare we have no competing interests.}

\funding{This work was funded by the Israel Science Foundation Grant No. 1479/16. E.E. thanks the Ascher foundation for their support. LCBdS acknowledges the support provided by the Mor\'a Miriam Rozen Gerber Fellowship for Brazilian Postdocs.}

\ack{We thank Y. Bussi and Z. Reich for helpful discussions and for introducing us to the fascinating and elaborate structure of plant thylakoids.}

\end{document}